\documentclass[11pt]{article}

\usepackage[margin=1in]{geometry}
\usepackage{iftex}
\ifPDFTeX
  \usepackage[T1]{fontenc}
  \usepackage[utf8]{inputenc}
  \usepackage{lmodern}
\else
  \usepackage{fontspec}
\fi
\usepackage{amsmath,amssymb,amsthm,mathtools}
\usepackage{microtype}
\usepackage[colorlinks=true,citecolor=blue,linkcolor=blue,urlcolor=blue]{hyperref}
\hypersetup{pdftitle={A Higher-Order Clique Density Theorem}, pdfauthor={Heng Li, Hong Liu, Yixiao Zhang}, pdfkeywords={clique density theorem, graphons, Lovasz-Simonovits profile, graphon stability, edit-distance stability}}

\newtheorem{theorem}{Theorem}[section]
\newtheorem{lemma}[theorem]{Lemma}
\newtheorem{proposition}[theorem]{Proposition}
\newtheorem{definition}[theorem]{Definition}

\newcommand{\eps}{\varepsilon}
\newcommand{\bino}[2]{\binom{#1}{#2}}
\newcommand{\cut}{\square}

\title{A Higher-Order Clique Density Theorem}
\author{Heng Li\thanks{School of Mathematics, Shandong University, Jinan, China, and Extremal Combinatorics and Probability Group (ECOPRO), Institute for Basic Science (IBS), Daejeon, South Korea. Supported by the National Natural Science Foundation of China (12501487), by China Scholarship Council and the Institute for Basic Science (IBS-R029-C4). Email: \texttt{heng.li@sdu.edu.cn}.}
\and Hong Liu\thanks{Extremal Combinatorics and Probability Group (ECOPRO), Institute for Basic Science (IBS), Daejeon, South Korea. Supported by the Institute for Basic Science (IBS-R029-C4). Email: \texttt{hongliu@ibs.re.kr}.}
\and Yixiao Zhang\thanks{Center for Discrete Mathematics, Fuzhou University, Fuzhou, China. Supported by the National Key R\&D Program of China (No.~2023YFA1010202) and the Doctoral Student Program of the Young S\&T Talents Cultivation Project, CAST. Email: \texttt{fzuzyx@gmail.com}.}}
\date{\today}

\begin{document}
\maketitle

\begin{abstract}
Reiher's clique density theorem determines the sharp lower envelope for the density of $K_r$ at fixed edge density. 
We prove a higher-order version in which the prescribed quantity is itself a clique density. 
For every $3\le s<r$, we determine the minimum possible $K_r$-density among graphons with prescribed $K_s$-density. 
For $s\ge3$ the constraint is genuinely nonlinear and leaves the edge density undetermined; nevertheless, on the positive range the sharp lower boundary is the classical multipartite edge-to-clique profile, reparametrised by $K_s$-density.

We also prove stability on the positive branches of this profile: at every interior point, near extremality forces cut-distance closeness to the corresponding extremal family at the induced edge density.
\end{abstract}

\section{Introduction}

Tur\'an's theorem~\cite{Turan1941} identifies the edge-density threshold below which the complete graph $K_r$ can be avoided. 
A fundamental quantitative refinement asks not merely when $K_r$ must appear, but how many copies are forced once this threshold is crossed. 
In its classical form, one fixes the edge density and minimises the density of $r$-cliques.

This is the Erd\H{o}s--Rademacher clique-density problem, a long-standing problem in extremal graph theory whose study goes back to the late 1950s and early 1960s. 
For triangles, early estimates are due to Goodman~\cite{Goodman1959} and Nordhaus--Stewart~\cite{NordhausStewart1963}; Moon and Moser~\cite{MoonMoser1962} treated four-cliques and established a general adjacent-clique inequality. 
The problem continued to drive a line of inequalities for complete subgraphs, including work of Bollob\'as~\cite{Bollobas1976}, Had\v ziivanov and Nikiforov~\cite{HadziivanovNikiforov1978}, and Fisher~\cite{Fisher1989}. 
Lov\'asz and Simonovits~\cite{LovaszSimonovits1983} conjectured the exact asymptotic lower envelope and the extremal examples: complete multipartite graphs in which all but at most one part have the same size. 
After the triangle case was proved by Razborov~\cite{Razborov2007,Razborov2008} and several further cases were treated by Nikiforov~\cite{Nikiforov2011}, Reiher~\cite{Reiher2016} finally proved the Lov\'asz--Simonovits conjecture in full.

We study the same lower-bound problem with the edge-density coordinate replaced by a clique-density coordinate. 
Given $2\le s<r$, we determine the minimum possible $K_r$-density among graphons with prescribed $K_s$-density. 
When $s=2$ this is precisely Reiher's theorem.  
For $s\ge3$, however, the constraint is already nonlinear and does not simply prescribe the average degree. 

A priori this leaves room for new extremal behaviour: the required $K_s$'s might be distributed very unevenly, rather than spread according to the multipartite construction. Our result shows that this gives no advantage: the sharp lower boundary is still the same multipartite curve. The main new difficulty is to show that extremality under the $K_s$-constraint nevertheless recovers the edge-density behaviour of the multipartite construction.

We work in the language of dense graph limits, introduced by Lov\'asz and Szegedy~\cite{LovaszSzegedy2006}; see Lov\'asz~\cite{Lovasz2012} for background. 
The related graph-profile viewpoint appears, for instance, in work of Hatami and Norine~\cite{HatamiNorine2011}, Blekherman, Raymond, Singh and Thomas~\cite{BlekhermanRaymondSinghThomas2022}, and Blekherman and Raymond~\cite{BlekhermanRaymond2026}. Throughout, a graphon is a symmetric measurable function $W:\Omega^2\to[0,1]$ on a standard atomless probability space $(\Omega,\mu)$, considered up to measure-preserving equivalence. 
We write $\|Z\|_\cut:=\sup_{A,B}|\int_{A\times B}Z\,d\mu^2|$ for the cut norm, where $A,B$ range over measurable sets. 
The \emph{cut distance} $\delta_\cut(U,W)$ is obtained by taking the infimum of
$\|U-W^\varphi\|_\cut$ over all measure-preserving relabellings $\varphi$, where
$W^\varphi(x,y):=W(\varphi(x),\varphi(y))$.
For $j\ge1$, the $K_j$-density in $W$ is defined as
\[
 g_j(W):=\frac1{j!}\int_{\Omega^j}\prod_{1\le a<b\le j} W(x_a,x_b)\,d\mu(x_1)\cdots d\mu(x_j).
\]
For a finite graph $G$, let $N_j(G)$ be the number of vertex subsets spanning $K_j$ and put $g_j(G):=N_j(G)/|V(G)|^j$. 
With the usual step graphon $W_G$, this normalisation gives $g_j(G)=g_j(W_G)$.

We now describe the candidate profile. 
In a complete multipartite graphon, a clique can use at most one vertex from each part, so clique densities are elementary symmetric functions of the part sizes. 
For integers $u\ge j$, put $\sigma_j(u):=\bino{u}{j}u^{-j}$, the $K_j$-density of the balanced complete $u$-partite graphon. 
For $0\le a\le1/(u-1)$, put
\[
 P_j(u,a):=\bino{u}{j}u^{-j}(1+a)^{j-1}\bigl(1-(j-1)a\bigr).
\]
This is the $K_j$-density of the complete $u$-partite graphon with $u-1$ parts of measure $(1+a)/u$ and one part of measure $(1-(u-1)a)/u$. 
As $a$ increases, the exceptional part shrinks to zero, so this branch connects the balanced $u$-partite and $(u-1)$-partite graphons.

\begin{definition}[The profile $\Phi_{s,r}$]
Let $2\le s<r$. 
Define $\Phi_{s,r}:[0,\frac1{s!}]\to[0,\frac1{r!}]$ as follows. 
If $0\le y\le\sigma_s(r-1)$, set $\Phi_{s,r}(y)=0$. 
If $\sigma_s(r-1)<y<1/s!$, choose $t\ge r$ with $y\in[\sigma_s(t-1),\sigma_s(t)]$, let $a\in[0,1/(t-1)]$ be the unique parameter satisfying $y=P_s(t,a)$, and set $\Phi_{s,r}(y):=P_r(t,a)$. 
At common endpoints adjacent branches agree. 
Finally set $\Phi_{s,r}(1/s!)=1/r!$. Equivalently, on the positive range $\Phi_{s,r}=F_r\circ F_s^{-1}$, where $F_m:=\Phi_{2,m}$ and $F_s^{-1}$ is taken on the strictly increasing positive range of $F_s$.
\end{definition}

We shall use Reiher's theorem in the following graphon form.

\begin{theorem}[Reiher~\cite{Reiher2016}]\label{thm:reiher}
For every $m\ge3$ and graphon $W$, one has $g_m(W)\ge F_m(g_2(W))$. 
\end{theorem}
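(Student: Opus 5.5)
The plan is to deduce the graphon statement from Reiher's finite-graph theorem by a standard limiting argument. Reiher~\cite{Reiher2016} proves that every finite graph $G$ on $n$ vertices satisfies $N_m(G)\ge n^m F_m(|E(G)|/n^2)$. In the normalisation used here this reads $g_m(G)\ge F_m(g_2(G))$. The first step is to check carefully that Reiher's parametrisation of the Lov\'asz--Simonovits curve matches $F_m=\Phi_{2,m}$ as defined above. On the branch $y\in[\sigma_2(t-1),\sigma_2(t)]$, the extremal graph is complete $t$-partite with $t-1$ equal parts and one smaller part. Its clique densities are exactly $P_2(t,a)$ and $P_m(t,a)$. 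So the two curves agree once we note that the edge density is $|E|/n^2$, not $|E|/\binom n2$, which is the asymptotic form of his bound. Since the step graphon $W_G$ vanishes on diagonal blocks, $g_j(W_G)=N_j(G)/n^j$ exactly. Hence the finite statement is literally $g_m(W_G)\ge F_m(g_2(W_G))$ for every graph $G$.

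Next, given an arbitrary graphon $W$, I will take $W$-random graphs $G_n=\mathbb G(n,W)$. By the sampling lemma of Lov\'asz--Szegedy, $\delta_\cut(W_{G_n},W)\to0$ almost surely. Alternatively, one can sample, compute expectations, and use concentration of homomorphism densities. The counting lemma gives $|t(K_j,U)-t(K_j,W)|\le\binom j2\delta_\cut(U,W)$. Therefore $g_2(W_{G_n})\to g_2(W)$ and $g_m(W_{G_n})\to g_m(W)$.

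The last step is to pass to the limit in $g_m(W_{G_n})\ge F_m(g_2(W_{G_n}))$, which requires $F_m$ to be continuous on $[0,1/2]$. On each branch $[\sigma_2(t-1),\sigma_2(t)]$, the map $a\mapsto P_2(t,a)$ is continuous and strictly monotone on $[0,1/(t-1)]$, so its inverse is continuous. Composing with the polynomial $P_m(t,\cdot)$ gives a continuous function on that branch. Adjacent branches agree at the common endpoints by the definition, and $F_m$ is $0$ on the initial interval, matching $P_m$ at $\sigma_2(m-1)$. There are infinitely many branches accumulating at $1/2$, so continuity at $1/2$ needs a separate check. As $t\to\infty$, both $\sigma_2(t)\to1/2$ and $\sigma_m(t)\to1/m!$. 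Moreover, $F_m$ on the branch $t$ lies between $\sigma_m(t-1)$ and $\sigma_m(t)$, since $P_m(t,\cdot)$ is monotone on the branch. Hence $F_m(y)\to1/m!=F_m(1/2)$ as $y\to1/2$.

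I expect the main obstacle to be purely bookkeeping: confirming that Reiher's finite theorem, stated with his own normalisation, yields exactly the curve $F_m$ here with no lower-order error terms. If his statement only holds asymptotically, the limit argument above still absorbs $o(1)$ errors, so the conclusion $g_m(W)\ge F_m(g_2(W))$ follows in either case.
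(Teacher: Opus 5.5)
Your proposal is correct and follows the paper's approach. The paper gives no proof beyond citing Reiher's finite theorem and noting that the graphon form follows by the standard approximation theory of graphons; your reduction (the exact identity $g_j(W_G)=N_j(G)/n^j$, $W$-random graphs converging in cut distance, the counting lemma, continuity of $F_m$ including at $1/2$, and the observation that the limit absorbs any $o(1)$ error terms) is exactly that argument written out.
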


Our main result is the corresponding clique-constrained theorem.

\begin{theorem}\label{thm:main}
Let $2\le s<r$. 
For every graphon $W$, one has $g_r(W)\ge \Phi_{s,r}(g_s(W))$.
\end{theorem}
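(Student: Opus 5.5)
The plan is to reduce to the consecutive case $r=s+1$ and to handle that case directly, by a symmetrisation argument followed by an optimisation over part sizes. Only at the end is Reiher's theorem (Theorem~\ref{thm:reiher}) used, and only to identify the resulting curve.

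\emph{Reduction to $r=s+1$.} On the positive range, $\Phi_{s,r}=F_r\circ F_s^{-1}$. Hence
\[
\Phi_{s,r}=\Phi_{r-1,r}\circ\Phi_{r-2,r-1}\circ\cdots\circ\Phi_{s,s+1},
\]
because the intermediate factors $F_j^{-1}\circ F_j$ cancel. This uses that each $F_j$ is strictly increasing on its positive range and that the zero ranges nest: $y\le\sigma_j(r-1)$ forces the image under $\Phi_{j,j+1}$ to be at most $\sigma_{j+1}(r-1)$. Each $\Phi_{j,j+1}$ is nondecreasing, since both $F_{j+1}$ and $F_j^{-1}$ are. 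So if $g_{j+1}(W)\ge\Phi_{j,j+1}(g_j(W))$ holds for every $j$, chaining these inequalities gives the theorem. The case $s=2$ is exactly Theorem~\ref{thm:reiher}. It therefore remains to prove $g_{s+1}\ge\Phi_{s,s+1}(g_s)$ for $s\ge3$.

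\emph{Symmetrisation.} By density of finite weighted graphs in cut distance, and continuity of $g_j$, it suffices to treat vertex-weighted graphs $(G,w)$ with $\sum w_v=1$. Fix $G$, and among all weightings with $g_s=y$ consider one that minimises $g_{s+1}$; suppose for contradiction it lies below the curve. Both $g_s$ and $g_{s+1}$ are multilinear in $w$, and every clique contains at most one vertex of an independent set. Hence, for pairwise nonadjacent $u,v,x$, both densities are affine on the plane $w_u+w_v+w_x=\mathrm{const}$. We then move weight along the line on which $g_s$ stays constant, in the direction in which $g_{s+1}$ does not increase, until some weight vanishes. Repeating this, and also applying Zykov-style cloning of a vertex onto a nonadjacent one (which keeps the pair of densities in the same half-plane position), should reduce to the case where $G$ is complete multipartite. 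I expect this reduction to be the main obstacle. Keeping $g_s$ fixed costs one degree of freedom, so two nonadjacent vertices alone do not suffice, and one must argue that non-multipartite configurations, such as induced $K_1\cup K_2$ patterns, can always be removed. If the three-vertex move fails, I would instead attempt Reiher's weighted clique-extension method directly in the $(g_s,g_{s+1})$ coordinates, replacing his use of degrees by the $K_{s-1}$-extension counts.

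\emph{Optimisation over part sizes.} Once $G$ is complete multipartite with part sizes $x_1,\dots,x_t$, we have $g_j=e_j(x)$, the elementary symmetric polynomial. The task is to minimise $e_{s+1}$ subject to $e_1=1$ and $e_s=y$. Lagrange multipliers with two constraints give that each $x_i$ is a root of a fixed polynomial determined by the multipliers. A standard Newton-inequality argument then shows that at a minimiser at most one coordinate differs from the others, and that this coordinate is the smaller one. This is exactly the family $P_j(t,a)$, which yields $\Phi_{s,s+1}$. Boundary effects, where some parts vanish, are absorbed by allowing $t$ to vary over the branches $[\sigma_s(t-1),\sigma_s(t)]$. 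Finally, agreement with $F_{s+1}\circ F_s^{-1}$ follows because both curves are parametrised by the same multipartite family.
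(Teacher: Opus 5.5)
\textbf{Verdict: there is a genuine gap.} The reduction to consecutive pairs is sound. The zero ranges nest, each $\Phi_{j,j+1}$ is nondecreasing, and the composition identity holds, so it suffices to prove $g_{s+1}\ge\Phi_{s,s+1}(g_s)$. The three-vertex move is also valid: pairwise nonadjacent $u,v,x$ share no clique, so both densities are affine in $(w_u,w_v,w_x)$, and a minimiser can be pushed until its support has independence number at most $2$.

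\textbf{Where the argument breaks.} The next step, which you flag yourself, is eliminating a nonadjacent non-twin pair (an induced $K_1\cup K_2$). This is not a technicality; it is the whole difficulty.
\begin{itemize}
\item A two-vertex Zykov move changes $g_s$ and $g_{s+1}$ simultaneously and linearly in the shifted weight. All it preserves is a linear inequality $g_{s+1}-\lambda g_s\le\mathrm{const}$ for a chosen slope $\lambda$, i.e.\ your ``same half-plane''.
\item By \eqref{eq:branch-derivative} with $r=s+1$, $\Phi'_{s,s+1}$ is proportional to $1+a$, which decreases as $g_s$ increases along a branch. So every positive branch is strictly concave, and the region on or above the graph of $\Phi_{s,s+1}$ is not convex.
\item Hence a counterexample can sit on a segment both of whose endpoints lie above the curve. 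Neither endpoint of your move need remain a counterexample.
\item Moves of this kind control only minima of linear functionals $g_{s+1}-\lambda g_s$, i.e.\ the convex hull of the feasible region. Its lower boundary is the lower convex envelope of $\Phi_{s,s+1}$, which lies on or below the chords joining consecutive points $(\sigma_s(t),\sigma_{s+1}(t))$. It is therefore strictly below $\Phi_{s,s+1}$ inside every branch.
\end{itemize}
The same obstruction is why, for $s=2$, symmetrisation gives only a piecewise-linear bound and Reiher's theorem needed new ideas. Your fallback (Reiher's method in $(g_s,g_{s+1})$ coordinates) is a hope rather than an argument.

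\textbf{The optimisation step is also only asserted.} The Lagrange conditions make the positive $x_i$ roots of a single degree-$s$ polynomial, so they take at most $s$ distinct values. Ruling out two or more non-maximal values, or several small parts, needs a genuine second-order analysis, not a ``standard Newton-inequality argument''.

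\textbf{What the paper does instead.} It never passes to multipartite graphs.
\begin{itemize}
\item It takes a global minimiser of $g_r-\Phi_{s,r}(g_s)$ and disposes of the critical values $g_s=\sigma_s(t)$ with the Cauchy--Schwarz recursion of Lemma~\ref{lem:adjacent-recursion}.
\item On a smooth branch it uses first variations: reweighting the vertex measure, and one-sided downward edge perturbations. These give $D=\lambda C-\tau$ and $(r-1)D\le(s-1)\lambda C$.
\item Each vertex link, normalised by the degree, is then an instance of the $(s-1,r-1)$ problem. Lemma~\ref{lem:one-variable} converts the induction hypothesis there into a lower bound on the degree.
\item This lower bound contradicts the upper bound $g_2(W)\le P_2(t,a)$ that Reiher's theorem forces once $g_s$ is pinned at a branch value.
\end{itemize}
This combination of link induction and Reiher's theorem used in reverse is what overcomes the concavity of the branches, and it has no counterpart in your proposal.
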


The bound is sharp throughout the domain. 
On a positive branch, equality is attained by the complete $t$-partite graphon with $t-1$ parts of measure $(1+a)/t$ and one part of measure $(1-(t-1)a)/t$. 
The endpoint $g_s=1/s!$ is attained by the complete graphon $W\equiv1$. 
On the zero interval, every value $0\le y\le\sigma_s(r-1)$ is attained with no $K_r$ by placing a balanced complete $(r-1)$-partite graphon on a measurable set of measure $(y/\sigma_s(r-1))^{1/s}$ and putting all remaining edges equal to zero.

We also prove stability statements. 
For Reiher's original edge-density problem, stability was obtained by Pikhurko and Razborov~\cite{PikhurkoRazborov2017} for triangles, and by Kim, Liu, Pikhurko and Sharifzadeh~\cite{KimLiuPikhurkoSharifzadeh2020} for larger cliques. 
For $m\ge3$ and $\gamma\in[0,1/2]$, let
\[
R_m(\gamma):=\{U:g_2(U)=\gamma\text{ and }g_m(U)=F_m(\gamma)\}
\]
be the graphon equality family in Reiher's clique density theorem.
Concretely, in the zero range $\gamma\le (m-2)/(2(m-1))$, this family consists of the $K_m$-free graphons with edge density $\gamma$. 
In the positive range, choose $k\ge m-1$ and $\beta\in[0,1]$ such that
        $\gamma=\frac{\binom{k}{2}+k\beta}{(k+\beta)^2}.$
Then the members of $R_m(\gamma)$ are, up to measure-preserving relabelling, obtained as follows. 
Partition the space as
        $A_1\cup\cdots\cup A_{k-1}\cup X,$
where $\mu(A_i)=1/(k+\beta)$ for every $i$ and $\mu(X)=(1+\beta)/(k+\beta)$. 
Put all edges between distinct parts of this partition, put no edges inside each $A_i$, and put inside $X$ an arbitrary triangle-free graphon whose edge density, after normalising $X$ to have measure one, is $\beta/(1+\beta)^2$.

For finite graphs, let $\mathcal F_{m,n}$ be the class of all $K_m$-free graphs on $n$ vertices. 
Define $H^+_{m,n}$ as follows. 
Choose an integer $k\ge m-1$ and an integer $q$ with $n/(k+1)<q\le n/k$, and put $b:=n-kq$. 
Partition the vertex set as
        $A_1\cup\cdots\cup A_{k-1}\cup B\cup C,$
where $|A_i|=|B|=q$ for every $i$ and $|C|=b$. 
Start with the complete $k$-partite graph with parts $A_1,\ldots,A_{k-1}, B\cup C,$
and then insert inside the exceptional part $B\cup C$ an arbitrary triangle-free graph with exactly $qb$ edges. 
Let $H^+_{m,n}$ be the family of all graphs obtained in this way that contain a copy of $K_m$. 
Finally set $H_{m,n}:=H^+_{m,n}\cup\mathcal F_{m,n}$.

\begin{theorem}[Pikhurko--Razborov~\cite{PikhurkoRazborov2017}; Kim--Liu--Pikhurko--Sharifzadeh~\cite{KimLiuPikhurkoSharifzadeh2020}]\label{thm:KLP}
Let $m\ge3$. 
For every $\eps>0$ there exist $\delta>0$ and $n_0$ such that every graph $G$ on $n\ge n_0$ vertices satisfying $g_m(G)\le F_m(g_2(G))+\delta$ can be made isomorphic to a member of $H_{m,n}$ by changing at most $\eps n^2$ adjacencies.
\end{theorem}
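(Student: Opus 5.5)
Since Theorem~\ref{thm:KLP} is quoted from~\cite{PikhurkoRazborov2017,KimLiuPikhurkoSharifzadeh2020}, we would simply cite it. If we had to reprove it, we would use the standard route from graph limits: first determine the exact extremisers at the graphon level, then pass back to finite graphs by compactness, and finally upgrade cut-closeness to edit-closeness. The zero range $g_2(G)\le (m-2)/(2(m-1))$ is the easy case. There $F_m(g_2(G))=0$, so the hypothesis gives $g_m(G)\le\delta$. The $K_m$ removal lemma then deletes $o(n^2)$ edges to produce a member of $\mathcal F_{m,n}$.

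\textbf{Step 1: exact graphon equality.} For the positive range we would first show that $\{U: g_2(U)=\gamma,\ g_m(U)=F_m(\gamma)\}$ is exactly $R_m(\gamma)$ as described above. The tool would be Zykov-type symmetrisation in the graphon setting. For a minimiser, any two non-adjacent ``vertices'' (points $x,y$ with $W(x,y)=0$) must have equal weighted link $K_{m-1}$-counts, after a Lagrange multiplier correction for the edge constraint. Otherwise cloning one onto the other strictly decreases $g_m$ at fixed $g_2$ to first order. Together with Reiher's inequality (Theorem~\ref{thm:reiher}) and the strict concavity along each branch, this should force a complete $(k-1)$-partite frame $A_1,\dots,A_{k-1}$ of equal measures. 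It should also force the remaining mass $X$ to carry a triangle-free graphon of the prescribed density. The reason is that triangles inside $X$ would create extra $K_m$'s with the $m-3$ frame parts, and the part sizes are then pinned by the two equations for $g_2$ and $g_m$.

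\textbf{Step 2: compactness.} Suppose the theorem fails. Then there are $\eps>0$ and graphs $G_n$ with $|V(G_n)|\to\infty$, $g_m(G_n)\le F_m(g_2(G_n))+1/n$, each at edit distance at least $\eps n^2$ from $H_{m,n}$. Pass to a cut-convergent subsequence with limit $W$. Continuity of homomorphism densities and of $F_m$ gives $g_m(W)=F_m(g_2(W))$, so Step~1 yields $W\in R_m(\gamma)$. Hence $\delta_\cut(W_{G_n},U_n)\to0$ for some $U_n\in R_m(\gamma)$.

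\textbf{Step 3: from cut to edit distance (main obstacle).} Cut-closeness does not in general imply edit-closeness, so we need the rigidity of the extremal structure. Take a regularity partition of $G_n$ aligned with the near-partition $A_1,\dots,A_{k-1},X$ inherited from $U_n$. Between parts, the target density is $1$, and for density-$1$ pairs cut-closeness does give edit-closeness. Inside each $A_i$, the target density is $0$, which again converts. Inside $X$, the graph has $o(n^3)$ triangles. Otherwise, combined with the nearly complete joins to the $m-3$ frame parts, it would contribute $\Omega(n^m)$ extra $K_m$'s and violate near-extremality. The triangle removal lemma then makes $G_n[X]$ triangle-free after $o(n^2)$ edits. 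Finally we move $o(n)$ vertices so that the parts have the exact sizes $q,\dots,q,q+b$, and add or delete $o(n^2)$ edges inside $B\cup C$ to reach exactly $qb$ edges while keeping it triangle-free. This is possible, for instance, by using a bipartite graph between $B$ and $C$. The result lies in $H^+_{m,n}$, contradicting the choice of $G_n$. The most delicate point is Step~1, namely the uniqueness analysis of equality in Reiher's theorem. Here we would follow the symmetrisation argument of~\cite{KimLiuPikhurkoSharifzadeh2020}.
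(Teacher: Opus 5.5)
Citing the result is exactly what the paper does. Theorem~\ref{thm:KLP} is imported from \cite{PikhurkoRazborov2017,KimLiuPikhurkoSharifzadeh2020} and never reproved, so your first sentence is the paper's entire treatment. Your optional reproof, however, is a plan rather than a proof. It has a genuine gap in Step~1 and a flawed justification at the end of Step~3.

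\textbf{Step 1 is asserted, not proved.} Determining the equality set is the whole content of the cited works, and the mechanism you describe does not reach it. In a graphon, Zykov cloning is the measure variation $d\mu_\eps=(1+\eps\phi)\,d\mu$. At a minimiser on a smooth branch it yields $W_x(K_{m-1})-\lambda\,W_x(K_1)=\mathrm{const}$ for almost every $x$, for all points and not only non-adjacent pairs. This is just the analogue of the vertex equation~\eqref{eq:vertex-equation}. Nothing in your sketch turns it, together with Reiher's inequality and concavity of the branches, into a $\{0,1\}$-valued graphon with a complete $(k-1)$-partite frame of equal parts. Your phrase ``this should force'' stands exactly where the missing argument belongs. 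Moreover, the equality set contains graphons that are not complete multipartite, since any triangle-free graphon of the right density may sit inside $X$. A procedure that symmetrises towards multipartite graphons therefore cannot by itself characterise equality. The paper describes the argument of \cite{KimLiuPikhurkoSharifzadeh2020} as variational-and-link rather than symmetrisation: first-order conditions combined with an analysis of vertex links. This is the template the paper adapts in Section~\ref{sec:proof-main} and Proposition~\ref{prop:equality-reduction}.

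\textbf{The exact edge count in Step 3.} The only bipartite graph between $B$ and $C$ with $qb=|B||C|$ edges is $K_{B,C}$, and the cleaned graph $T=G_n[X]$ need not be close to it. For $\beta=(3-\sqrt5)/2$ the limit inside $X$ may be a balanced blow-up of $C_5$, which is $\Theta(n^2)$ away from every bipartite graph. And if $e(T)<qb$ with $T$ maximal triangle-free, no edge can be added at all.

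A working fix uses the freedom in $q$, taking the representation with $\beta\in[0,1)$.
\begin{enumerate}
\item If $\beta=0$, both $qb$ and $e(T)$ are $o(n^2)$, so simply replace $T$ by $K_{B,C}$.
\item If $0<\beta<1$, raise $q$ by $x$. This lowers $qb$ by about $\frac{k-\beta}{k+\beta}xn$. To match, move the $(k-1)x$ vertices of smallest $T$-degree out of $X$, putting $x$ of them into each $A_i$. This costs $O(xn)$ edits and destroys at most about $\frac{2(k-1)\beta}{(1+\beta)(k+\beta)}xn$ edges of $T$. The net gain is $\frac{1-\beta}{1+\beta}xn>0$. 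Hence some $x=o(n)$ converts the $o(n^2)$ deficit into an $o(n^2)$ surplus, which you then delete.
\end{enumerate}
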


We next formulate stability for the same $K_s\to K_r$ profile in two settings: a local graphon version on each smooth positive branch, and a finite edit-distance version covering all branches. For a family $\mathcal A$ of graphons, write $\delta_\cut(W,\mathcal A):=\inf_{U\in\mathcal A}\delta_\cut(W,U)$.

\begin{theorem}[Graphon stability]\label{thm:graphon-stability}
Let $3\le s<r$ and $t\ge r$, and let $y=P_s(t,a)$ for some $0<a<1/(t-1)$. 
Set $\gamma_y:=P_2(t,a)$. 
For every $\eps>0$ there exists $\delta>0$ such that every graphon $W$ satisfying $|g_s(W)-y|\le\delta$ and $g_r(W)\le\Phi_{s,r}(g_s(W))+\delta$ also satisfies $\delta_\cut(W,R_s(\gamma_y))<\eps$.
\end{theorem}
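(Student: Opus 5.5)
Stability will come from compactness of the graphon space in the cut distance, together with continuity of clique densities. If the statement fails for some $\eps$, there is a sequence $W_n$ with $g_s(W_n)\to y$ and $\limsup_n (g_r(W_n)-\Phi_{s,r}(g_s(W_n)))\le 0$, yet $\delta_\cut(W_n,R_s(\gamma_y))\ge\eps$. Passing to a subsequence, $W_n\to W$ in $\delta_\cut$. Homomorphism densities are continuous, and $\Phi_{s,r}$ is continuous at $y$ because $y$ is interior to a smooth branch. Hence $g_s(W)=y$ and $g_r(W)\le\Phi_{s,r}(y)$, and Theorem~\ref{thm:main} upgrades this to equality. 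It therefore suffices to prove the exact characterisation: every graphon $W$ with $g_s(W)=y$ and $g_r(W)=\Phi_{s,r}(y)$ lies in $R_s(\gamma_y)$. Since $R_s(\gamma_y)$ is closed under $\delta_\cut$-equivalence, $\delta_\cut(W_n,W)\to0$ then contradicts the choice of the $W_n$.

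For the characterisation, write $\gamma=g_2(W)$. On the positive range, $\Phi_{s,r}=F_r\circ F_s^{-1}$ with $F_s^{-1}$ increasing. Reiher's theorem gives $y=g_s(W)\ge F_s(\gamma)$, so $\gamma\le F_s^{-1}(y)=\gamma_y$ whenever $\gamma$ lies in the positive range of $F_s$. If we also knew $g_r(W)\ge F_r(\gamma)$ with a matching reverse comparison, we would only get $F_r(\gamma)\le F_r(\gamma_y)$, which is the wrong direction. So the proof of Theorem~\ref{thm:main} must supply an inequality that links $g_r$ to $g_s$ directly. I expect it to be a Moon--Moser / Reiher-type adjacent-clique inequality of the form $g_r(W)\ge \Psi(g_s(W),g_2(W))$, where $\Psi$ is decreasing in the edge density, together with $\Psi(y,\gamma)\ge F_r(F_s^{-1}(y))$ for all $\gamma$. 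Equality forces $\gamma=\gamma_y$ and equality in each intermediate step. In particular it forces $g_s(W)=F_s(g_2(W))$ with $g_2(W)=\gamma_y$, meaning $W$ is extremal for Reiher's $K_s$-problem at edge density $\gamma_y$. That is exactly $W\in R_s(\gamma_y)$. Here one needs $s\ge3$ so that $R_s$ is defined, and $\gamma_y$ strictly inside a branch, which holds because $0<a<1/(t-1)$.

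The main obstacle is this equality analysis. I need to trace through the argument behind Theorem~\ref{thm:main} and confirm that near-equality in the final $K_r$ bound propagates back to near-equality $g_s(W)\approx F_s(g_2(W))$ with $g_2(W)\approx\gamma_y$. The danger is a slack step, such as a convexity or Jensen step over vertex-local clique densities, that could be tight without pinning $g_2$. The plan is to show strict monotonicity of the relevant one-variable function in $\gamma$ on the open branch, using $a\in(0,1/(t-1))$ to exclude the branch endpoints where two multipartite shapes meet. If that direct route fails, a fallback is to argue at the limit object $W$ itself: show that $g_2(W)<\gamma_y$ would allow a local perturbation lowering $g_r$ at fixed $g_s$, contradicting the sharpness of Theorem~\ref{thm:main}. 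Once $W$ is known to be $K_s$-extremal at $\gamma_y$, the explicit description of $R_s(\gamma_y)$ (or Theorem~\ref{thm:KLP} in its graphon form) completes the proof.
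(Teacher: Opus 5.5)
Your compactness reduction is correct and matches the paper's. The cut-limit $W$ of a violating sequence has $g_s(W)=y$ and, by Theorem~\ref{thm:main}, $g_r(W)=\Phi_{s,r}(y)$. So everything rests on the exact characterisation, which is the paper's Proposition~\ref{prop:equality-reduction}. You also correctly obtain $g_2(W)\le\gamma_y$ from Reiher's theorem.

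\textbf{The gap.} The matching lower bound $g_2(W)\ge\gamma_y$ is the entire content of the characterisation, and you never supply it. The inequality you expect to find behind Theorem~\ref{thm:main} would be $g_r(W)\ge\Psi(g_s(W),g_2(W))$ for all $W$, with $\Psi(y,\cdot)$ strictly decreasing and $\Psi(y,\gamma_y)=\Phi_{s,r}(y)$. That inequality is not there. The proof of Theorem~\ref{thm:main} argues by contradiction at a minimiser and produces no two-parameter inequality valid for arbitrary graphons. Moreover, a $\Psi$ with exactly those properties is just a reformulation of the statement you need. Your fallback, a perturbation lowering $g_r$ at fixed $g_s$ when $g_2(W)<\gamma_y$, points in the right direction. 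But it does not say which perturbations to use or how they would control the edge density.

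\textbf{The missing idea.} An exact equality case is a \emph{global minimiser} of $U\mapsto g_r(U)-\Phi_{s,r}(g_s(U))$, precisely because Theorem~\ref{thm:main} says this functional is nonnegative. Also, $\Phi_{s,r}$ is differentiable at $y$ since $0<a<1/(t-1)$; set $\lambda=\Phi_{s,r}'(y)$. So Step~1 of the main proof applies verbatim:
\begin{itemize}
\item reweighting the measure gives $W_x(K_{r-1})=\lambda W_x(K_{s-1})-\tau$ a.e.;
\item one-sided edge deletion gives $(r-1)W_x(K_{r-1})\le(s-1)\lambda W_x(K_{s-1})$ a.e.;
\item integrating the first identity at equality forces $\tau=\tau_0$, i.e.\ $M=1$ in the notation of the proof of Theorem~\ref{thm:main}.
\end{itemize}
Now pass to the link of a.e.\ vertex $x$ and apply Theorem~\ref{thm:main} there for the pair $(s-1,r-1)$. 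Lemma~\ref{lem:one-variable-M-one} turns this local profile inequality into the pointwise degree bound
\[
\omega(x)\ge\frac{t-1}{s-1}\Bigl(\frac{\eta(x)}{L}+s-2\Bigr),\qquad d(x)=\rho\,\omega(x).
\]
Integrating with $\int\eta\,d\mu=L(1-(s-1)a)$ gives
\[
2g_2(W)=\rho\int\omega\,d\mu\ge\rho(t-1)(1-a)=2P_2(t,a).
\]
This is the lower bound $g_2(W)\ge\gamma_y$ you lacked. Combined with Reiher's upper bound it forces $g_2(W)=\gamma_y$, hence $g_s(W)=y=F_s(\gamma_y)$, i.e.\ $W\in R_s(\gamma_y)$ by definition. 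No near-equality propagation or Jensen-type slack analysis is needed: the pointwise bounds are linear in $\eta$, and compactness has already reduced everything to exact equality.
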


For finite graphs, in the positive range the target family is $H^+_{s,n}$, while in the zero range it is the class $\mathcal F_{r,n}$ of all $K_r$-free graphs on $n$ vertices. 
Put $H_{s,r,n}:=H^+_{s,n}\cup\mathcal F_{r,n}$.

\begin{theorem}[Finite edit-distance stability]\label{thm:finite-stability}
Let $3\le s<r$. 
For every $\eps>0$ there exist $\delta>0$ and $n_0$ such that every graph $G$ on $n\ge n_0$ vertices satisfying $g_r(G)\le\Phi_{s,r}(g_s(G))+\delta$ can be made isomorphic to a member of $H_{s,r,n}$ by changing at most $\eps n^2$ adjacencies.
\end{theorem}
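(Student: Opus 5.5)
The plan is a compactness argument. It transfers the graphon statements (Theorem~\ref{thm:main} and Theorem~\ref{thm:graphon-stability}) to finite graphs and then invokes Theorem~\ref{thm:KLP} with $m=s$.

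Suppose the statement fails for some $\eps>0$. Then there are graphs $G_n$ on $n\to\infty$ vertices with
\[
g_r(G_n)\le\Phi_{s,r}(g_s(G_n))+1/n,
\]
each at edit distance more than $\eps n^2$ from every member of $H_{s,r,n}$. Passing to a subsequence, $W_{G_n}\to W$ in cut distance, and clique densities converge. The function $\Phi_{s,r}$ is continuous, since adjacent branches agree at common endpoints. Hence $g_r(W)\le\Phi_{s,r}(g_s(W))$, and Theorem~\ref{thm:main} forces equality. Put $y:=g_s(W)$. We split into cases according to $y$.

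\emph{Zero range, $y\le\sigma_s(r-1)$.} Here $\Phi_{s,r}(y)=0$, so $g_r(G_n)\to0$, i.e.\ $G_n$ has $o(n^r)$ copies of $K_r$. The graph removal lemma then deletes $o(n^2)$ edges to make $G_n$ $K_r$-free, that is, to land in $\mathcal F_{r,n}\subseteq H_{s,r,n}$. This is a contradiction.

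\emph{Positive range, $y>\sigma_s(r-1)$.} Write $y=P_s(t,a)$ with $t\ge r$.

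First suppose $0<a<1/(t-1)$. Apply Theorem~\ref{thm:graphon-stability} to $W$ with every $\eps'>0$ (the hypotheses hold with $\delta=0$). This gives $\delta_\cut(W,R_s(\gamma_y))=0$. The family $R_s(\gamma_y)$ is cut-closed, because it is defined by equalities of continuous parameters. Hence
\[
g_2(W)=\gamma_y\quad\text{and}\quad g_s(W)=F_s(\gamma_y).
\]
By continuity of $g_2$, $g_s$ and $F_s$,
\[
g_s(G_n)-F_s(g_2(G_n))\to0.
\]
So for large $n$ the graph $G_n$ satisfies the hypothesis of Theorem~\ref{thm:KLP} with $m=s$ and any prescribed $\delta$. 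It can therefore be edited by $\eps n^2$ adjacencies into a member of $H_{s,n}=H^+_{s,n}\cup\mathcal F_{s,n}$. The target cannot lie in $\mathcal F_{s,n}$ once $\eps$ is small: editing $\eps n^2$ edges changes $g_s$ by $O(\eps)$, while $g_s(G_n)\to y>0$. So the target lies in $H^+_{s,n}\subseteq H_{s,r,n}$, again a contradiction.

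The main obstacle is the case where $y$ is a branch endpoint. This means $a\in\{0,1/(t-1)\}$, i.e.\ $y=\sigma_s(t)$ for some $t\ge r-1$, the case $t=r-1$ being the boundary of the zero range. It also includes $y=1/s!$. Theorem~\ref{thm:graphon-stability} is stated only for interior points, so what we need there is the implication
\[
g_r(W)=\Phi_{s,r}(g_s(W))\ \Longrightarrow\ g_s(W)=F_s(g_2(W)).
\]
Note that the endpoint $y=\sigma_s(r-1)$ is already covered by the zero-range case.

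For the point $y=1/s!$ one can argue directly: $g_s(W)=1/s!$ forces $W\equiv1$ almost everywhere, so the implication holds. For the remaining endpoints I would first try to extract the implication from the proof of Theorem~\ref{thm:main}. That proof must show that any deficit $g_s(W)-F_s(g_2(W))>0$ strictly increases $g_r$ above $\Phi_{s,r}(g_s(W))$. Presumably this comes from an inequality of the form
\[
g_r(W)-\Phi_{s,r}(g_s(W))\ \ge\ c\,\bigl(g_s(W)-F_s(g_2(W))\bigr)
\]
with $c>0$ locally uniform, and such an inequality should remain valid at the balanced points $\sigma_s(t)$, where one-sided derivatives of $\Phi_{s,r}$ exist.

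Failing that, I would use the interior case together with a one-sided continuity argument. Suppose $y=\sigma_s(t)$ and $W$ is extremal but has a positive deficit. Consider the mixtures $W_\lambda$ obtained by blowing up $W$ together with a small block of the appropriate neighbouring extremal multipartite graphon. These would be near-extremal at interior points $y_\lambda$ while keeping a deficit bounded away from zero. That contradicts Theorem~\ref{thm:graphon-stability} applied along $y_\lambda\to y$, provided the $\delta$ there can be taken locally uniform near $y$.
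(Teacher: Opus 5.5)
Your zero-range case, the point $y=1/s!$, and the smooth-branch case are sound. For the smooth branch, $\delta_\cut(W,R_s(\gamma_y))=0$ together with continuity of $g_2,g_s$ does give $g_2(W)=\gamma_y$ and $g_s(W)=F_s(g_2(W))$. Your detour excluding targets in $\mathcal F_{s,n}$ is unnecessary: $\mathcal F_{s,n}\subseteq\mathcal F_{r,n}$, so $H_{s,n}\subseteq H_{s,r,n}$ directly.

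The gap is where you locate it: the critical values $y=\sigma_s(t)$ with $t\ge r$. There you need $g_r(W)=\sigma_r(t)\Rightarrow g_s(W)=F_s(g_2(W))$, and neither proposed route supplies it.
\begin{itemize}
\item The proof of Theorem~\ref{thm:main} contains no quantitative inequality of the form $g_r-\Phi_{s,r}(g_s)\ge c\,(g_s-F_s(g_2))$. It is a contradiction argument at a minimiser. At $\sigma_s(t)$ it never enters the variational argument, because the vertex equation \eqref{eq:vertex-equation} needs $\Phi_{s,r}$ to be differentiable at $g_s(W)$, which fails there.
\item The mixture route is circular. Theorem~\ref{thm:graphon-stability} is proved by compactness at a \emph{fixed} interior point, so its $\delta$ may degenerate as $y_\lambda\to\sigma_s(t)$. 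Proving local uniformity near $\sigma_s(t)$ by compactness produces an exact extremiser with $g_s=\sigma_s(t)$, which is precisely the case you are trying to handle. Also, $W_\lambda$ is near-extremal only because $W_\lambda\to W$, with no rate to compare against $\delta(y_\lambda)$.
\end{itemize}

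The missing ingredient is the adjacent-clique recursion of Lemma~\ref{lem:adjacent-recursion}, used with strictness. Suppose $g_s(W)=\sigma_s(t)$ and $g_r(W)=\sigma_r(t)$.
\begin{itemize}
\item Theorem~\ref{thm:main} for the pair $(s-1,s)$, with strict monotonicity of $\Phi_{s-1,s}$, gives $g_{s-1}(W)\le\sigma_{s-1}(t)$.
\item If this were strict, then $z_s=s\,g_s(W)/g_{s-1}(W)>(t-s+1)/t$.
\item The map $z\mapsto(mz-1)/(m-1)$ is strictly increasing. So the bound $z_{m+1}\ge(mz_m-1)/(m-1)$ carries the strict gain to every $z_m$ with $s\le m\le r$.
\item The product formula from Lemma~\ref{lem:critical-values} then gives $g_r(W)>\sigma_r(t)$, a contradiction.
\end{itemize}
Hence $g_{s-1}(W)=\sigma_{s-1}(t)$, so $W$ is again an equality case at a critical value, now for $K_{s-1}\to K_s$. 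Repeat this downward; the last step uses Reiher's theorem for the pair $(2,3)$. This gives $g_2(W)=\sigma_2(t)$, and therefore $g_s(W)=\sigma_s(t)=F_s(g_2(W))$.

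This is the critical case of Proposition~\ref{prop:equality-reduction}. With it in hand, your compactness argument and the appeal to Theorem~\ref{thm:KLP} with $m=s$ finish exactly as in the paper.
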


We conclude the introduction by outlining the proofs.

\medskip
\noindent\textbf{Proof sketch.} 
Our argument is inspired by the variational-and-link philosophy used by
Kim, Liu, Pikhurko and Sharifzadeh~\cite{KimLiuPikhurkoSharifzadeh2020}
in their structural stability proof for Reiher's clique density theorem. The proof of Theorem~\ref{thm:main} proceeds by induction on $s$, with Reiher's theorem as the base case. 
The zero interval is accounted for by $K_r$-free multipartite examples, and the endpoint $g_s=1/s!$ forces $W=1$ almost everywhere. 
Thus the main issue is the positive range. There is a first technical obstacle at the nonsmooth critical values $g_s=\sigma_s(t)$, where two adjacent multipartite branches meet and the profile is not differentiable. 
We handle these points before the variational argument, using a recursion between adjacent clique densities. 
This recursion shows that once the $K_s$-density reaches the balanced $t$-partite value, all higher clique densities must be at least their balanced $t$-partite values.

It remains to treat a smooth positive branch. 
Assume, for contradiction, that a graphon $W$ minimises $g_r-\Phi_{s,r}(g_s)$ with negative value. 
The main difficulty is that the constraint fixes a nonlinear clique density rather than the edge density, so one cannot directly apply Reiher's theorem to obtain the desired lower bound. 
Instead, first variations force additional structure. A variation of the underlying measure gives a vertex equation relating $W_x(K_{r-1})$ and $W_x(K_{s-1})$, while a one-sided edge variation gives a complementary inequality on pairs. 
These Lagrange conditions are the mechanism by which near-extremality under the $K_s$-constraint begins to recover the missing edge-density information.

The next step is to pass to vertex links. 
After normalising by the degree of $x$ and by the branch parameters, the link of a typical vertex becomes an instance of the lower-order problem $K_{s-1}\to K_{r-1}$. 
The induction hypothesis gives a local profile inequality inside each link. 
This inequality still contains local normalising parameters, and the analytic core of the proof is a one-variable lemma which converts it into a lower bound on the degree of $x$. 
Combinatorially, this says that a hypothetical counterexample cannot hide its $K_s$'s in links of too small degree; the multipartite link is already the most efficient local configuration. Averaging the local degree bounds gives a global lower bound on the average degree of $W$. 
Reiher's original $K_2\to K_s$ theorem then enters in the opposite direction: since $g_s(W)$ is fixed at the value of the chosen multipartite branch, the edge density of $W$ cannot exceed the corresponding multipartite edge density. 
The lower bound from the link analysis and the upper bound from Reiher are incompatible, by a final scalar inequality in the branch parameter. 
This contradiction proves the main theorem.

For stability, we connect near extremality for the new higher-order constraint to the known stability theory for the edge-constrained problem. 
The key step is an equality reduction: every exact positive-range equality case for the $K_s\to K_r$ profile is already an equality case for Reiher's $K_2\to K_s$ theorem at the corresponding edge density. 
This is proved by running the variational/link argument at equality; on smooth branches it forces equality in the edge-density bound, while critical points are handled by the adjacent-clique recursion and induction. 
Once this reduction is available, stability follows by compactness: any sequence of near extremisers has an exact equality limit, and the equality reduction identifies that limit with the desired equality family.

\medskip
\noindent\textbf{Organisation.} 
Section~\ref{sec:critical} collects some basic graphon computations and then treats the balanced multipartite critical values. 
Section~\ref{sec:properties} proves the analytic estimates for the profile. 
The proof of Theorem~\ref{thm:main} is given in Section~\ref{sec:proof-main}. 
Section~\ref{sec:stability} proves Theorems~\ref{thm:graphon-stability} and~\ref{thm:finite-stability}.

\section{Critical values}\label{sec:critical}

We first collect some elementary facts about the profile $\Phi_{s,r}$ and the link densities. We identify graphons that differ by a measure-preserving transformation; the quantities $g_j$ are unchanged under this identification. The endpoint identities
\(P_j(u,0)=\sigma_j(u)\) and  \(P_j\left(u,\frac1{u-1}\right)=\sigma_j(u-1)
\)
show that adjacent branches in the definition of $\Phi_{s,r}$ fit together continuously. Moreover,
\[
 \frac{\partial}{\partial a}P_j(u,a)
 =-j(j-1)\bino{u}{j}u^{-j}a(1+a)^{j-2}.
\]
Thus, for fixed $u$ and $j\ge2$, $P_j(u,a)$ is strictly decreasing in $a$ on $[0,\frac{1}{u-1}]$. Hence $\Phi_{s,r}$ is well-defined and continuous. On a smooth positive branch, say $y=P_s(t,a)$ with $0<a<\frac{1}{t-1}$, the chain rule gives
\begin{equation}\label{eq:branch-derivative}
 \Phi'_{s,r}(y)=\frac{\frac{d}{da}P_r(t,a)}{\frac{d}{da}P_s(t,a)}
 =\frac{r(r-1)}{s(s-1)}\frac{\sigma_r(t)}{\sigma_s(t)}(1+a)^{r-s}.
\end{equation}
The endpoint values $\sigma_s(t)$ and $\sigma_r(t)$ increase with $t$; hence the same monotonicity holds across consecutive branches. In particular, $\Phi_{s,r}$ is strictly increasing on its positive range.

Empty products and integrals over $\Omega^0$ are taken to be $1$. For $j\ge2$, define
\[
 W_x(K_{j-1})=\frac1{(j-1)!}\int_{\Omega^{j-1}}\prod_{1\le a<b\le j-1}W(y_a,y_b)
 \prod_{a=1}^{j-1}W(x,y_a)\,d\mu^{j-1},
\]
and
\[
 W_{x,y}(K_{j-2})=\frac1{(j-2)!}\int_{\Omega^{j-2}}\prod_{1\le a<b\le j-2}W(z_a,z_b)
 \prod_{a=1}^{j-2}W(x,z_a)W(y,z_a)\,d\mu^{j-2}.
\]
By Fubini,
\begin{equation}\label{eq:vertex-fubini}
 \int_\Omega W_x(K_{j-1})\,d\mu(x)=j g_j(W),
\end{equation}
and, for almost every $x$,
\begin{equation}\label{eq:edge-fubini}
 \int_\Omega W(x,y)W_{x,y}(K_{j-2})\,d\mu(y)=(j-1)W_x(K_{j-1}).
\end{equation}
We use Theorem~\ref{thm:reiher} in graphon form throughout the proof. This is equivalent to Reiher's finite asymptotic theorem by the standard compactness and approximation theory of graphons; see, for example,~\cite{Lovasz2012}. For $m\ge3$, the branch parametrisation in Theorem~\ref{thm:reiher} shows that $F_m=\Phi_{2,m}$ is continuous and strictly increasing on its positive range.

\medskip

We now proceed to treat the critical values case. The candidate profile is not differentiable at the balanced multipartite values. We deal with these values before entering the variational argument. This step is the analogue of the treatment of critical values in the classical clique density theorem.

\begin{lemma}\label{lem:adjacent-recursion}
For every graphon $W$ and every integer $m\ge2$, writing $k_j=g_j(W)$,
\[
 m^2 k_m^2\le k_{m-1}\bigl(k_m+(m^2-1)k_{m+1}\bigr).
\]
\end{lemma}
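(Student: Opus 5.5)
The plan is to prove a Moon--Moser-type inequality by applying Cauchy--Schwarz over $(m-1)$-cliques, and then to split the resulting second moment according to whether the two common neighbours are adjacent.

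\textbf{Setup.} For $Q=(y_1,\dots,y_{m-1})\in\Omega^{m-1}$ put
\[
w(Q):=\frac1{(m-1)!}\prod_{a<b}W(y_a,y_b)
\quad\text{and}\quad
c(Q):=\int_\Omega\prod_{a=1}^{m-1}W(x,y_a)\,d\mu(x).
\]
So $w$ is the clique weight and $c(Q)$ is the common-neighbourhood measure. By Fubini, $\int w=k_{m-1}$ and $\int w\,c=m k_m$.

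\textbf{Step 1 (Cauchy--Schwarz).} Cauchy--Schwarz with respect to the measure $w\,d\mu^{m-1}$ gives
\[
m^2k_m^2\le k_{m-1}\int w\,c^2 .
\]
It therefore suffices to show $\int w\,c^2\le k_m+(m^2-1)k_{m+1}$.

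\textbf{Step 2 (splitting $c^2$).} Write
\[
c(Q)^2=\int\!\!\int\prod_a W(x,y_a)W(x',y_a)\,d\mu(x)\,d\mu(x'),
\]
and insert $1=W(x,x')+(1-W(x,x'))$.

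The $W(x,x')$ part is an integral over $(m+1)$-cliques in which $Q$ is an ordered $(m-1)$-subset and $(x,x')$ is an ordered pair. Counting labelings, it equals $\frac{(m+1)!}{(m-1)!}k_{m+1}=m(m+1)k_{m+1}$.

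For the non-adjacent part $D$, I reinterpret $S=Q\cup\{x\}$ as an $m$-clique, with $x'$ adjacent to every vertex of $S$ except the distinguished vertex $x$. Regrouping over the choice of the distinguished vertex $v\in S$ writes $D$ as $m\,m!$ times the $m$-clique weight integrated against
\[
\sum_{v\in S}(1-p_v)\prod_{u\ne v}p_u,\qquad p_u:=W(x',u).
\]
This sum is the probability that exactly one of $m$ independent events with probabilities $1-p_u$ occurs. Hence it is at most the probability that at least one occurs, which is $1-\prod_u p_u$.

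\textbf{Step 3 (integrating).} Integrating the bound of Step 2 against $d\mu(x')$ and the $m$-clique weight, and using $\int W_x(K_m)\,d\mu(x)=(m+1)k_{m+1}$ from \eqref{eq:vertex-fubini}, gives $D\le C\bigl(k_m-(m+1)k_{m+1}\bigr)$. Here $C$ is the combinatorial normalising constant coming from the labelings. Adding the adjacent part $m(m+1)k_{m+1}$ then yields a bound of the form $\int w\,c^2\le \alpha k_m+\beta k_{m+1}$.

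\textbf{Main obstacle.} The delicate step is the bookkeeping of the factorial normalisations, that is, matching $C$, $\alpha$ and $\beta$ against the target $k_m+(m^2-1)k_{m+1}$.

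If the crude estimate $1-\prod p_u$ turns out too lossy, I would instead keep the exact identity
\[
\sum_v(1-p_v)\prod_{u\ne v}p_u=\sum_v\prod_{u\ne v}p_u-m\prod_u p_u,
\]
and integrate it exactly. By \eqref{eq:vertex-fubini} and \eqref{eq:edge-fubini}, $\sum_v\prod_{u\ne v}p_u$ integrates to a multiple of $W$-weighted $(m-1)$-clique counts around $m$-cliques, which is again expressible through $k_m$ and $k_{m+1}$. The $-m\prod_u p_u$ term is what converts the coefficient $m(m+1)$ into $m^2-1$.

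Before writing the argument out, I would check the final constants on the balanced complete $u$-partite graphon, where $k_j=\sigma_j(u)$.
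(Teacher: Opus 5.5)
Your main route is correct and is essentially the paper's argument. Both use Cauchy--Schwarz over weighted $(m-1)$-cliques, followed by a pointwise bound on the $(m+1)$-vertex integrand of $\int w\,c^2$. The only difference is how that pointwise bound is organised.

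Write $a_f$ for the value of $W$ on a pair $f$ of $S\cup\{x'\}$, and $\Pi(S)=\prod_{u<v\in S}W(u,v)$. Multiplied by $\Pi(S)$, your ``exactly one $\le$ at least one'' step is the rooted inequality
\[
\sum_{v\in S}\prod_{f\ne vx'}a_f\le \Pi(S)+(m-1)\prod_f a_f .
\]
Summing it over the $m+1$ choices of the root $x'$ gives exactly the paper's symmetric inequality. That inequality says twice the sum over pairs $e$ of $\prod_{f\ne e}a_f$ is at most the sum of the $m$-clique products plus $(m^2-1)\prod_f a_f$. The paper proves it by multilinearity and a $0/1$ case check. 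Your probabilistic justification needs no reduction to $0/1$ values and is slightly stronger pointwise. The paper's fully symmetric form identifies all terms with $k_m$ and $k_{m+1}$ in a single symmetrisation.

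The bookkeeping you left open is decisive. The lemma is an equality for balanced complete multipartite graphons, so no constant may be off. It does work out with no loss. By symmetry in $S$, with integrals over $(S,x')\in\Omega^{m+1}$,
\[
D=\frac1{(m-1)!}\cdot\frac1m\int \Pi(S)\sum_{v\in S}(1-p_v)\prod_{u\ne v}p_u\le\frac1{m!}\int \Pi(S)\Bigl(1-\prod_{u\in S} p_u\Bigr)=k_m-(m+1)k_{m+1}.
\]
So $C=1$, not the factor $m\,m!$ you wrote. Adding $m(m+1)k_{m+1}$ gives exactly $k_m+(m^2-1)k_{m+1}$. It is the single $-\prod_u p_u$ from $1-\prod_u p_u$ that lowers $m(m+1)$ to $m^2-1$. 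The crude estimate is therefore exactly what is needed.

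Drop the fallback. The quantity $\frac1{m!}\int\Pi(S)\sum_v\prod_{u\ne v}p_u$ is precisely $\int w\,c^2$ again: the density of $K_{m+1}$ minus an edge, which for $m=2$ is $\int d^2$. It is not a function of $k_m$ and $k_{m+1}$, so ``integrating exactly'' is circular.
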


\begin{proof}
For $z=(z_1,\dots,z_{m-1})$, set
 $A(z)=\prod_{1\le a<b\le m-1}W(z_a,z_b)$ and  $\eta(z)=\int_\Omega\prod_{a=1}^{m-1}W(x,z_a)\,d\mu(x)$.
Then
\[
 k_{m-1}=\frac1{(m-1)!}\int A,
 \qquad \text{and } \quad
 m k_m=\frac1{(m-1)!}\int A\eta.
\]
By Cauchy--Schwarz, applied to $A^{1/2}$ and $A^{1/2}\eta$ on $\Omega^{m-1}$, we get
\begin{equation}\label{eq:cauchy-adjacent}
 m^2 k_m^2\le k_{m-1}\cdot \frac1{(m-1)!}\int A\eta^2.
\end{equation}
It remains to bound the last factor. Since the space is atomless, diagonal contributions have measure zero. After symmetrising over an ordered $(m+1)$-tuple $u=(u_1,\dots,u_{m+1})$, we get
\[
 \frac1{(m-1)!}\int A\eta^2=\frac2{(m+1)!}\int_{\Omega^{m+1}}B(u)\,d\mu^{m+1}(u),
\]
where, for $a_{ij}=W(u_i,u_j)$,
$B(u)=\sum_{e\in\binom{[m+1]}2}\prod_{f\in\binom{[m+1]}2\setminus\{e\}}a_f.$
Here the factor $\frac{2}{(m+1)!}$ comes from the symmetrisation. After the original $(m-1)$-tuple and the two vertices from $\eta^2$ are arranged as an ordered $(m+1)$-tuple, the missing pair is an unordered edge, while the two vertices from $\eta^2$ may be ordered in two ways. Also
\[
 k_m=\frac1{(m+1)!}\int S(u), \qquad k_{m+1}=\frac1{(m+1)!}\int K(u),
\]
where
 $S(u)=\sum_{T\in\binom{[m+1]}m}\prod_{e\in\binom{T}2}a_e$ and 
 $K(u)=\prod_{e\in\binom{[m+1]}2}a_e$.
The pointwise inequality
\(
 2B(u)\le S(u)+(m^2-1)K(u)
\)
holds for all $a_e\in[0,1]$. Both sides are multilinear in the edge variables, so it suffices to check $a_e\in\{0,1\}$. If at least two edges are missing, then $K=B=0$. If exactly one edge is missing, then $K=0$, $B=1$, and $S=2$. If no edge is missing, then $K=1$, $B=\binom{m+1}{2}$, and $S=m+1$, so $2B-S=m^2-1$. Integrating the pointwise inequality gives
\[
 \frac1{(m-1)!}\int A\eta^2\le k_m+(m^2-1)k_{m+1}.
\]
Combining this with~\eqref{eq:cauchy-adjacent} proves the lemma.
\end{proof}

\begin{lemma}\label{lem:critical-values}
Let $3\le s<r\le t$. Assume Theorem~\ref{thm:main} is known for source clique size $s-1$ and all larger target clique sizes. For every graphon $W$, if $g_s(W)=\sigma_s(t)$, then $g_r(W)\ge\sigma_r(t)$.
\end{lemma}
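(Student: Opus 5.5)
The plan is to run the adjacent-clique recursion of Lemma~\ref{lem:adjacent-recursion} upward from level $s$ to level $r$. To start it, I first use the induction hypothesis to get an upper bound on the $K_{s-1}$-density. Write $k_j=g_j(W)$ and $\sigma_j=\sigma_j(t)$.

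\emph{Base step.} Apply Theorem~\ref{thm:main} with source $s-1$ and target $s$; when $s-1=2$ this is Theorem~\ref{thm:reiher}. This gives $\sigma_s=k_s\ge\Phi_{s-1,s}(k_{s-1})$.
\begin{itemize}
\item Since $t\ge r>s$, the value $\sigma_{s-1}(t)$ lies in the positive range of $\Phi_{s-1,s}$. It is a branch endpoint, so $\Phi_{s-1,s}(\sigma_{s-1})=\sigma_s$.
\item $\Phi_{s-1,s}$ is strictly increasing on its positive range, so $k_{s-1}>\sigma_{s-1}$ would force $k_s>\sigma_s$, a contradiction. Hence $k_{s-1}\le\sigma_{s-1}$.
\item Since $k_s=\sigma_s>0$, we also have $k_{s-1}>0$.
\end{itemize}
Together these give the ratio bound $k_s/k_{s-1}\ge\sigma_s/\sigma_{s-1}$. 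This is the one place the inductive hypothesis is used, and I expect it to be the only delicate point: one must check that $\sigma_{s-1}(t)$ sits on the strictly increasing part of the profile, which holds because $t\ge s+1>s-1$.

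\emph{Inductive step.} For $s\le m<r$ I maintain the two invariants
\[
k_m\ge\sigma_m \qquad\text{and}\qquad \frac{k_m}{k_{m-1}}\ge\frac{\sigma_m}{\sigma_{m-1}}.
\]
Lemma~\ref{lem:adjacent-recursion} rearranges to
\[
k_{m+1}\ge \frac{k_m}{m^2-1}\Bigl(m^2\frac{k_m}{k_{m-1}}-1\Bigr).
\]
The bracket on the right-hand side, after substituting the lower bound from the ratio invariant, is
\[
m^2\frac{\sigma_m}{\sigma_{m-1}}-1=\frac{m(t-m+1)}{t}-1=\frac{(m-1)(t-m)}{t},
\]
using $\sigma_m/\sigma_{m-1}=(t-m+1)/(mt)$. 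This is nonnegative because $m<r\le t$.

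Because the bracket is nonnegative, both invariants may be substituted monotonically. This gives two conclusions.
\begin{itemize}
\item First, $k_{m+1}\ge \sigma_m\frac{(m-1)(t-m)}{(m^2-1)t}=\sigma_m\frac{t-m}{(m+1)t}=\sigma_{m+1}$.
\item Second, dividing by $k_m$ gives $k_{m+1}/k_m\ge\frac{t-m}{(m+1)t}=\sigma_{m+1}/\sigma_m$.
\end{itemize}
So both invariants propagate from $m$ to $m+1$.

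The initial invariants at $m=s$ are $k_s=\sigma_s$ and the ratio bound from the base step. Iterating up to $m+1=r$ yields $g_r(W)\ge\sigma_r(t)$, as claimed.

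A consistency check: the balanced complete $t$-partite graphon gives equality in every step. This confirms the algebraic identity $\frac{(m-1)(t-m)}{(m^2-1)t}\,\sigma_m=\sigma_{m+1}$ used above.
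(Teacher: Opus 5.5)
Your proposal is correct and follows the paper's proof: the induction hypothesis gives $0<g_{s-1}(W)\le\sigma_{s-1}(t)$, and Lemma~\ref{lem:adjacent-recursion} then propagates the ratio bound $g_m(W)/g_{m-1}(W)\ge\sigma_m(t)/\sigma_{m-1}(t)$ upward. The paper phrases this ratio bound as $z_m=m g_m/g_{m-1}\ge (t-m+1)/t$. The only difference is cosmetic: you carry $g_m(W)\ge\sigma_m(t)$ as a separate invariant, whereas the paper gets it at the end by telescoping the ratio bounds from $g_s(W)=\sigma_s(t)$.
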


\begin{proof}
Write $k_j=g_j(W)$. Since $\Phi_{s-1,s}(\sigma_{s-1}(t))=\sigma_s(t)$, the induction hypothesis and the strict monotonicity of $\Phi_{s-1,s}$ on its positive range imply that $k_{s-1}>\sigma_{s-1}(t)$ would force $k_s>\sigma_s(t)$, a contradiction. Hence $k_{s-1}\le\sigma_{s-1}(t)$. Since $k_s=\sigma_s(t)>0$, we must have $k_{s-1}>0$. Put
\(
 z_m:=\frac{m k_m}{k_{m-1}}.
\)
Then $z_s$ is defined and
\(
 z_s\ge\frac{s\sigma_s(t)}{\sigma_{s-1}(t)}=\frac{t-s+1}{t}.
\)
Lemma~\ref{lem:adjacent-recursion} gives, after division by $k_{m-1}k_m$ whenever these quantities are positive,
\(
 m z_m\le 1+(m-1)z_{m+1},
\)
equivalently $z_{m+1}\ge \frac{m z_m-1}{m-1}$. Starting from the displayed positive lower bound for $z_s$, this recursion gives successively
\(
 z_m\ge\frac{t-m+1}{t}>0\) for \(s\le m\le r.
\)
In particular $k_m>0$ for $s\le m\le r$, and all quotients used in the recursion are legitimate. Consequently
\[
 k_r=k_s\prod_{m=s}^{r-1}\frac{k_{m+1}}{k_m}
 \ge \binom{t}{s}t^{-s}\prod_{m=s}^{r-1}\frac{t-m}{t(m+1)}
 =\binom{t}{r}t^{-r}.
\]
This proves the lemma.
\end{proof}

\section{Properties of the multipartite profiles}\label{sec:properties}

We record here the analytic estimates for the multipartite profile functions $\Phi_{m,q}$ that will be used in the proof. Throughout this section, $N$ denotes an auxiliary profile parameter, while $n$ is reserved for the number of vertices in a finite graph. For integers $N\ge j-1$ and real $b$ with $N+b>0$, set
\[
 D_j(N,b):=\frac{\binom{N}{j}+\binom{N}{j-1}b}{(N+b)^j}.
\]
For $0\le b\le1$, this is the $K_j$-density of the complete $(N+1)$-partite graphon with $N$ equal parts of measure $(N+b)^{-1}$ and one part of measure $b(N+b)^{-1}$. The two parametrisations are related by
\[
 D_j(N,\beta)=P_j\left(N+1,\frac{1-\beta}{N+\beta}\right),\qquad 0\le\beta\le1.
\]
Thus the positive branches of $\Phi_{m,q}$ may equivalently be parametrised by $D_m(N,\beta)\mapsto D_q(N,\beta)$.

\begin{lemma}\label{lem:Q-monotone}
Let $2\le m<q$. Define
\(
 Q_{m,q}(y):=\frac{\Phi_{m,q}(y)}{y^{\frac{q}{m}}}
\)
on the positive range of $\Phi_{m,q}$, and set $Q_{m,q}(y)=0$ on the zero interval. Then $Q_{m,q}$ is nondecreasing on its domain and strictly increasing on the positive range of $\Phi_{m,q}$.
\end{lemma}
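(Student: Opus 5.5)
The plan is to work branch by branch and use the $D_j(N,\beta)$ parametrisation. On the positive range, fix a branch $N\ge q-1$ with $\beta\in[0,1]$, so that $y=D_m(N,\beta)$ and $\Phi_{m,q}(y)=D_q(N,\beta)$. Set $h(N,\beta):=\log D_q(N,\beta)-\frac{q}{m}\log D_m(N,\beta)$. On a single branch $y$ is strictly increasing in $\beta$, because $D_j(N,\beta)=P_j(N+1,a)$ with $a=(1-\beta)/(N+\beta)$ decreasing in $\beta$ and $P_j$ decreasing in $a$. So it suffices to show $\partial_\beta h>0$ for $0<\beta<1$. The branches glue continuously at the critical values $\sigma_m(N)$, and $\Phi_{m,q}$ and $y$ are continuous and increasing across branches. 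Strict increase of $Q_{m,q}$ on each closed branch therefore gives strict increase on the whole positive range.

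For the derivative, write $D_j=\frac{\binom{N}{j-1}}{(N+\beta)^j}\bigl(\tfrac{N-j+1}{j}+\beta\bigr)$. This gives
\[
\partial_\beta\log D_j=\frac{1}{c_j+\beta}-\frac{j}{N+\beta},\qquad c_j:=\frac{N-j+1}{j}.
\]
The $-j/(N+\beta)$ terms cancel in $\partial_\beta h$, because they enter with weights $1$ and $-q/m$:
\[
\partial_\beta h=\frac{1}{c_q+\beta}-\frac{q}{m}\cdot\frac{1}{c_m+\beta}.
\]
This is positive iff $m(c_m+\beta)>q(c_q+\beta)$, i.e. $m c_m-qc_q>(q-m)\beta$. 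Now $mc_m=N-m+1$ and $qc_q=N-q+1$, so the left side equals $q-m$. The condition becomes $q-m>(q-m)\beta$, which holds for $\beta<1$. One caveat: on the first positive branch $N=q-1$ we have $c_q=0$, and at $\beta=0$ the value $D_q$ is $0$. The log-derivative is still fine for $\beta>0$, and there $Q_{m,q}\to0$ as $\beta\downarrow0$. So strict increase on the open branch, plus continuity at its endpoints, finishes the positive range.

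It remains to connect this to the zero interval. There $Q_{m,q}=0$, and on the positive range $Q_{m,q}>0$, so $Q_{m,q}$ is nondecreasing on the whole domain. At the endpoint $y=1/m!$ we have $\Phi_{m,q}=1/q!$, and this is the limit $N\to\infty$ of the branch values. Monotonicity extends there by continuity, or equally by checking $\lim_{N\to\infty}Q=(1/q!)(m!)^{q/m}$ against the values on every branch.

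The main obstacle I anticipate is only bookkeeping. I need the branches to be ordered correctly, so that larger $N$ corresponds to larger $y$, and the endpoint identities to match: $D_j(N,1)=\sigma_j(N+1)$ and $D_j(N,0)=\sigma_j(N)$ with the same $y$. These follow from the relation to $P_j$ recorded above.
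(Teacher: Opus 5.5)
Your proof is correct and follows essentially the same route as the paper: the same branchwise $D_j(N,\beta)$ parametrisation and the same log-derivative computation, which reduces to $\frac{(q-m)(1-\beta)}{m(c_q+\beta)(c_m+\beta)}>0$ for $\beta<1$. Your gluing across branches and at the endpoints by continuity also matches the paper, including its caveat about the endpoint where $D_q(q-1,0)=0$.
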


\begin{proof}
On a positive branch write $y=D_m(N,\beta)$, and $\Phi_{m,q}(y)=D_q(N,\beta)$ for $0\le\beta\le1$. For $j\in\{m,q\}$, put $\alpha_j=\frac{N-j+1}{j}$. Away from the endpoint at which $D_q(N,\beta)=0$, we may differentiate logarithms. Then
 $D_j(N,\beta)=\binom{N}{j-1}\frac{\alpha_j+\beta}{(N+\beta)^j}$, and hence  $\frac{d}{d\beta}\log D_j(N,\beta)=\frac{(j-1)(1-\beta)}{(N+\beta)(\alpha_j+\beta)}$.
Therefore
\[
 \frac{d}{d\beta}\log\left(\frac{D_q(N,\beta)}{D_m(N,\beta)^{\frac{q}{m}}}\right)
 =\frac{(q-m)(1-\beta)}{m(\alpha_q+\beta)(\alpha_m+\beta)}\ge0.
\]
Since $D_m(N,\beta)$ is increasing in $\beta$, the quotient $Q_{m,q}$ is nondecreasing on each branch. On the interior of a positive branch the displayed derivative is strictly positive. The endpoint cases follow by continuity. Adjacent endpoint values agree, and hence $Q_{m,q}$ is strictly increasing on the positive range and constant only on the zero interval.
\end{proof}

\begin{lemma}\label{lem:support}
Let $2\le m<q$. If $N\ge q-1$ and
\(
 -\frac{N-q+1}{q}\le b\le0,
\)
then $D_m(N,b)$ lies in the domain of $\Phi_{m,q}$ and
\(
 \Phi_{m,q}(D_m(N,b))\ge D_q(N,b).
\)
\end{lemma}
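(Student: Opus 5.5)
The plan has two parts: first check that $D_m(N,b)$ lies in the domain of $\Phi_{m,q}$, and then compare the parametrised curve $b\mapsto\bigl(D_m(N,b),D_q(N,b)\bigr)$, $b\in[-\alpha_q,0]$, with the graph of $\Phi_{m,q}$ on a logarithmic scale. Here $\alpha_j=\frac{N-j+1}{j}$ as in Lemma~\ref{lem:Q-monotone}, so the hypothesis reads $-\alpha_q\le b\le 0$.

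\emph{Domain.} Using $D_j(N,b)=\binom{N}{j-1}\frac{\alpha_j+b}{(N+b)^j}$ together with $\alpha_m>\alpha_q\ge -b$, we get $D_m(N,b)>0$. The logarithmic derivative computed in the proof of Lemma~\ref{lem:Q-monotone} also holds for $b\le0$ whenever $\alpha_j+b>0$. It shows that $D_m(N,\cdot)$ is increasing on $[-\alpha_q,0]$. Hence $0<D_m(N,b)\le D_m(N,0)=\sigma_m(N)<1/m!$. Moreover, at $b=-\alpha_q$ we have $D_q(N,b)=0$, so the inequality is trivial there. At $b=0$ it is an equality, because $(D_m(N,0),D_q(N,0))=(\sigma_m(N),\sigma_q(N))$ is a branch endpoint of $\Phi_{m,q}$. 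So it remains to treat the open range $-\alpha_q<b<0$, where both coordinates are positive.

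\emph{Log-slope comparison.} Set $Y=\log D_m$ and $Z=\log D_q$ along the curve. The derivative formula from Lemma~\ref{lem:Q-monotone} gives
\[
\frac{dZ}{dY}=\frac{q-1}{m-1}\cdot\frac{\alpha_m+b}{\alpha_q+b}
=\frac{q-1}{m-1}\Bigl(1+\frac{\frac1m-\frac1q}{\frac1q-\frac{1-b}{N+1}}\Bigr).
\]
This is a decreasing function of $b$. On a genuine branch of $\Phi_{m,q}$ with parameters $(N',\beta)$, where $N'\le N$ and $0\le\beta\le1$, the same formula holds with $\frac{1-\beta}{N'+1}$ in place of $\frac{1-b}{N+1}$. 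Both curves pass through the common point $(\log\sigma_m(N),\log\sigma_q(N))$ at $b=0$. Now move $Y$ downward from $\log\sigma_m(N)$. If at every common $Y$-value the curve has log-slope at least that of $\log\Phi_{m,q}$, then integrating from $\log\sigma_m(N)$ downwards gives $Z\le\log\Phi_{m,q}(e^Y)$, which is the claim.

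\emph{Main obstacle.} The difficulty is that the slopes must be compared at the same $y$-value, not at the same parameter. For a given $y=D_m(N,b)<\sigma_m(N)$, I would identify the branch $(N',\beta)$ of $\Phi_{m,q}$ through $y$ and prove $\frac{1-b}{N+1}\ge\frac{1-\beta}{N'+1}$. If that is awkward, I would instead aim for the equivalent statement that the log-slope $\frac{d\log\Phi}{d\log y}$ is a nonincreasing function of $y$ on each side. In the parametrisation this reads: $u=\frac{1-\beta}{N'+1}$ decreases as $y$ increases. That holds within a branch, and across branches it holds because $\frac{1}{N'+1}$ at $\beta=0$ matches $\frac{0}{N'}$... only up to the jump at the critical values, so the nonsmooth points need separate care there. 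The complementary fact needed is that along the curve, $u=\frac{1-b}{N+1}$ decreases with $y$ faster than along $\Phi$. I would check this by writing both $y$-values via the identity $D_m=\binom{N}{m-1}\frac{\alpha_m+b}{(N+b)^m}$ and comparing $u$ at equal $y$ directly. Should the comparison fail on some subinterval, the fallback is a direct convexity argument. There I would use Lemma~\ref{lem:Q-monotone} (monotonicity of $Q_{m,q}$) together with the monotonicity in $b$ of $D_q/D_m^{q/m}$ proved above, restricted to the part of the range where these two monotonicities point the same way.
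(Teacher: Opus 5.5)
\textbf{There is a genuine gap: the key comparison is never proved.} Your reduction is sound. Along the curve $\frac{dZ}{dY}=\frac{q-1}{m-1}\cdot\frac{1/m-u}{1/q-u}$ with $u=\frac{1-b}{N+1}$, and this is increasing in $u$ for $u<\frac1q$. The same expression governs a genuine branch, with $u=\frac{1-\beta}{N'+1}$. Integrating downward from the common point $(\log\sigma_m(N),\log\sigma_q(N))$ then gives the lemma, provided you know $u_{\mathrm{curve}}(y)\ge u_\Phi(y)$ at every common $y$.

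That comparison is the entire content of the lemma, and none of the substitutes you offer works:
\begin{itemize}
\item The log-slope of $\Phi_{m,q}$ is nonincreasing only inside each branch. As $y$ increases through a critical value $\sigma_m(N')$, $u_\Phi$ jumps up from $0$ to $\frac1{N'+1}$. In any case this property is not equivalent to what you need.
\item Your ``complementary fact'' that $u$ changes faster along the curve than along $\Phi$ is false. For $m=2$, $q=3$, $N=3$, the gap $u_{\mathrm{curve}}-u_\Phi$ tends to $\frac14$ as $y\uparrow\frac13$, but is only about $0.05$ at $y=\frac9{32}$ (where $b=-\frac13$). So it is $u_\Phi$ that moves faster.
\item The fallback via Lemma~\ref{lem:Q-monotone} is vacuous. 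Both $D_q/D_m^{q/m}$ along your curve and $Q_{m,q}$ are nondecreasing in $y$ and agree at $y=\sigma_m(N)$. For smaller $y$ both merely lie below the same number. A comparison would need the two monotonicities to point in opposite directions, which never happens.
\end{itemize}

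\textbf{How to close it.} A reparametrisation makes the comparison at equal $y$ immediate. With $w=\frac{N+b}{N+1}=1-u$,
\[
D_m(N,b)=A_N\,\psi(w),\qquad A_N:=\binom{N}{m-1}(N+1)^{1-m},\qquad \psi(w):=\Bigl(w-\tfrac{m-1}{m}\Bigr)w^{-m},
\]
for every $b$, including $b=\beta\in[0,1]$ on a genuine branch. Here $\psi'(w)=(m-1)(1-w)w^{-m-1}>0$ on $(0,1)$. Also $A_N=\frac1{(m-1)!}\prod_{i=0}^{m-2}\bigl(1-\frac{i+1}{N+1}\bigr)$ is strictly increasing in $N$.

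For $y<\sigma_m(N)$, the branch of $\Phi_{m,q}$ through $y$ has $N'\le N-1$. So $A_N\psi(w_N)=y=A_{N'}\psi(w_{N'})$ forces $w_N<w_{N'}$, i.e.\ $u_{\mathrm{curve}}(y)>u_\Phi(y)$.

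The same identity gives $D_m(N,-\alpha_q)=A_N\psi(1-\frac1q)>A_{q-1}\psi(1-\frac1q)=\sigma_m(q-1)$ for $N\ge q$. Hence $\Phi_{m,q}>0$ on the whole range, and $Y\mapsto\log\Phi_{m,q}(e^Y)$ is continuous and piecewise $C^1$ there. Your integration step needs exactly this.

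\textbf{Comparison with the paper.} With this inserted, your argument becomes a valid differential alternative to the paper's proof. The paper instead compresses $N\to N-1$ step by step, keeping $D_m$ fixed and not decreasing $D_q$. The inequality $A_{N-1}<A_N$ is exactly the estimate the paper uses to locate the compressed point.
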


\begin{proof}
Put $y=N+b$ and $f_j^{(N)}(y):=\binom{N}{j-1}\frac{y-\frac{(j-1)(N+1)}{j}}{y^j}.$ Then $D_j(N,b)=f_j^{(N)}(y)$, and the hypothesis on $b$ is equivalent to
\begin{equation}\label{eq:support-range}
 \frac{(q-1)(N+1)}q\le y\le N.
\end{equation}
If $N=q-1$, then the interval in the hypothesis is the singleton $b=0$. Thus $D_q(q-1,0)=0$ and $D_m(q-1,0)=\sigma_m(q-1)$, which is the endpoint of the zero interval of $\Phi_{m,q}$; the conclusion follows directly. We may therefore assume $N\ge q$.

We claim that for every $y$ satisfying~\eqref{eq:support-range} there is a unique
\(
 Y\in\left[\frac{(q-1)N}{q},N\right]
\)
such that
\begin{equation}\label{eq:Ym-equal}
 f_m^{(N-1)}(Y)=f_m^{(N)}(y) \quad \text{and}\quad f_q^{(N-1)}(Y)\ge f_q^{(N)}(y).
\end{equation}
Indeed,
$ \frac{d}{dy}f_j^{(N)}(y)=\binom{N}{j-1}\frac{(j-1)(N+1-y)}{y^{j+1}},$
so $f_j^{(N)}$ is increasing on $(0,N]$. With $Y_0=\frac{Ny}{N+1}$, one has $Y_0\ge\frac{(q-1)N}{q}$ and
\[
 \frac{f_m^{(N-1)}(Y_0)}{f_m^{(N)}(y)}
 =\frac{N-m+1}{N}\left(\frac{N+1}{N}\right)^{m-1}
 \le \left(1-\frac1{N+1}\right)^{m-1}\left(\frac{N+1}{N}\right)^{m-1}
 =1.
\]
Moreover $f_m^{(N-1)}(N)=f_m^{(N)}(N)\ge f_m^{(N)}(y)$. Thus the first part of~\eqref{eq:Ym-equal} has a unique solution $Y$ in the required interval.

Regard $Y$ as a function of $y$ and set
\(
 G(y):=f_q^{(N-1)}(Y(y))-f_q^{(N)}(y).
\)
Then $G(N)=0$. Implicit differentiation shows that $G'(y)\le0$ is equivalent to
\(
 \left(\frac yY\right)^{q-m}\le\frac{N-m+1}{N-q+1}.
\)
Since $Y\ge \frac{Ny}{N+1}$, with $d=q-m<N$ we have
\[
 \left(\frac yY\right)^d\le\left(1+\frac1N\right)^d\le\frac{N}{N-d}\le\frac{N-m+1}{N-q+1},
\]
the last inequality being equivalent to $(q-m)(m-1)\ge0$. Hence $G'(y)\le0$, and so $G(y)\ge G(N)=0$, proving the second part of~\eqref{eq:Ym-equal}.

Set $b_1=Y-(N-1)$. Since $N\ge Y\ge\frac{(q-1)N}{q}$, we have
 $1\ge b_1\ge\frac{(q-1)N}{q}-(N-1)=-\frac{N-q}{q},$
so if $b_1<0$ the same hypothesis is valid with $N$ replaced by $N-1$. Moreover
\[
 D_m(N-1,b_1)=D_m(N,b),\qquad D_q(N-1,b_1)\ge D_q(N,b).
\]
If $b_1\in[0,1]$, the point lies on a genuine multipartite branch and the conclusion follows from the definition of $\Phi_{m,q}$. If $b_1<0$, repeat the compression step. At each step the first parameter decreases by one, $D_m$ is preserved, and $D_q$ does not decrease. The process must terminate after finitely many steps, because at $N=q-1$ the admissible negative interval has collapsed to the single point $b=0$. Hence for some $N'\ge q-1$ and some $\beta\in[0,1]$,
 $D_m(N',\beta)=D_m(N,b)$, $D_q(N',\beta)\ge D_q(N,b),$
and therefore $\Phi_{m,q}(D_m(N,b))\ge D_q(N,b)$.
\end{proof}

Let $2\le m<q$, put
\(
 L:=\frac{q}{q-m},
\)
and let $t\ge q+1$. Define
\(
 c:=\frac1L\binom{t-1}{m}\), and \(h:=\frac{q-m}{m}\binom{t-1}{q}.
\)

\begin{lemma}\label{lem:one-variable}
Assume $M>1$ and $M\le \eta\le LM$. Let $\omega>0$, and suppose that $\frac{c\eta}{\omega^m}$ lies in the domain of $\Phi_{m,q}$ and that
\begin{equation}\label{eq:one-var-hyp}
 \frac{h(\eta-M)}{\omega^q}\ge \Phi_{m,q}\left(\frac{c\eta}{\omega^m}\right).
\end{equation}
Then
$ \omega\ge\frac{t-1}{m}M^{\frac{1}{m}-1}\left(\frac\eta L+(m-1)M\right).$
\end{lemma}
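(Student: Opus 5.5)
The plan is to argue by contradiction against a single threshold value, using the monotone quotient $Q_{m,q}$ of Lemma~\ref{lem:Q-monotone} to reduce everything to one comparison at that threshold. Put
\[
 \omega_0:=\frac{t-1}{m}M^{\frac1m-1}\Bigl(\frac\eta L+(m-1)M\Bigr),\qquad y(\omega):=\frac{c\eta}{\omega^m}.
\]
Writing $\Phi_{m,q}(y)=Q_{m,q}(y)\,y^{q/m}$, hypothesis~\eqref{eq:one-var-hyp} becomes
\[
 h(\eta-M)\ \ge\ Q_{m,q}\bigl(y(\omega)\bigr)\,(c\eta)^{q/m}.
\]
Here the factor $\omega^{-q}$ cancels on both sides. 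The map $\omega\mapsto y(\omega)$ is decreasing and $Q_{m,q}$ is nondecreasing. So if $\omega<\omega_0$, then $Q_{m,q}(y(\omega))\ge Q_{m,q}(y(\omega_0))$, with strict inequality whenever $y(\omega_0)$ lies in the positive range. It therefore suffices to prove the threshold inequality
\[
 \frac{h(\eta-M)}{\omega_0^q}\ \le\ \Phi_{m,q}\Bigl(\frac{c\eta}{\omega_0^m}\Bigr),\tag{$*$}
\]
together with strictness in the relevant cases. If $y(\omega_0)$ lies in the zero interval, then $(*)$ would force $\eta\le M$, and one would instead check directly that the left-hand side is positive. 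Some care is also needed because $y(\omega_0)$ must be in the domain; since $y(\omega)\le 1/m!$ by assumption and $\omega<\omega_0$ gives $y(\omega_0)<y(\omega)$, this is automatic.

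Next I would remove $M$. Substitute $\eta=LMx$ with $x\in[1/L,1]$. Then $\eta/L+(m-1)M=M(x+m-1)$, so $\omega_0=\frac{t-1}{m}M^{1/m}(x+m-1)$. The argument of $\Phi_{m,q}$ becomes
\[
 \binom{t-1}{m}\frac{m^m x}{(t-1)^m(x+m-1)^m},
\]
which is independent of $M$. The left side of $(*)$ equals $M^{1-q/m}$ times its value at $M=1$. Since $M>1$ and $q>m$, this factor is below $1$ (and this also supplies strictness when the left side is positive). So $(*)$ reduces to a one-variable inequality in $x\in[1/L,1]$:
\[
 \frac{q-m}{m}\binom{t-1}{q}\frac{(Lx-1)m^q}{(t-1)^q(x+m-1)^q}\ \le\ \Phi_{m,q}\Bigl(\binom{t-1}{m}\frac{m^m x}{(t-1)^m(x+m-1)^m}\Bigr).
\]

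For this inequality I would invoke Lemma~\ref{lem:support} with $N=t-1\ge q$ and a suitable $b\le 0$ depending on $x$. The natural first attempt is to choose $N+b$ proportional to $x+m-1$. At $x=1$, i.e. $\eta=LM$, this should give $b=0$ and $D_m(t-1,0)=\sigma$-type values. One then checks two things. First, $D_m(t-1,b)$ equals (or, by monotonicity of $\Phi_{m,q}$, is at most) the argument on the right. Second, $D_q(t-1,b)$ dominates the left side. If the parametrisations do not match exactly, I would treat the right side as $\Phi_{m,q}$ evaluated along the curve $b\mapsto(D_m(N,b),D_q(N,b))$ and compare the two sides by differentiating in $x$. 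Both sides should agree at the endpoint $x=1$, where Lemma~\ref{lem:support} is an equality. The derivatives can then be compared using the explicit formula $\frac{d}{dy}f_j^{(N)}$ from the proof of Lemma~\ref{lem:support}. I also need to verify that the resulting $b$ stays in $[-(N-q+1)/q,0]$ for $x\in[1/L,1]$; the lower endpoint $x=1/L$ is where the left side vanishes.

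The main obstacle is this last step: finding the right $b(x)$ so that the $D_m$-coordinates match while the $D_q$-comparison holds on the whole interval. This is a calculus inequality between rational functions. I would first attempt it via a tangent-line or convexity comparison at $x=1$, and fall back on a direct sign analysis of the derivative if that fails.
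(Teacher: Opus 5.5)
You set the proof up the same way the paper does: the threshold $\omega_0$ (the paper's $y_0=\omega_0M^{-1/m}$), Lemma~\ref{lem:Q-monotone} to pass from $\omega$ to $\omega_0$, and the factor $M^{1-q/m}<1$ for strictness when $\eta>M$. But all the content of the lemma then sits in your $M=1$ inequality on $x\in[1/L,1]$. You leave that open, and the route you sketch for it cannot work.

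That inequality has no slack. Put $\omega_*:=\frac{t-1}{m}(x+m-1)$. For $x\in[1-\frac{m}{t-1},1]$ (all of $[1/L,1]$ when $t=q+1$), the pair (argument, left side) is exactly a point of the genuine branch of $\Phi_{m,q}$ between the balanced $(t-2)$- and $(t-1)$-partite graphons, so the inequality is an identity there. Lemma~\ref{lem:support} with $N=t-1$ and $b<0$ only produces points on or below the profile. Near the common endpoint $(\sigma_m(t-1),\sigma_q(t-1))$ they are strictly below: the curve $b\mapsto(D_m(t-1,b),D_q(t-1,b))$ leaves that point with slope larger than the branch slope by the factor $\frac{t-m}{t-q}>1$.

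Since $\Phi_{m,q}$ is increasing, any point with $D_m\le$ argument and $D_q\ge$ left side lies on or above the profile. Hence no choice of $b(x)\le0$ with $N=t-1$ can certify the inequality near $x=1$, and a derivative comparison at $x=1$ fails for the same reason. Concretely, your natural choice $b=\frac{t-1}{m}(x-1)$ does give $D_m(t-1,b)\le$ argument, but it also gives $D_q(t-1,b)<$ left side for $x<1$. At $x=1/L$ it gives $b=-\frac{t-1}{q}$, outside the admissible range $b\ge-\frac{t-q}{q}$.

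The missing idea is to lower the first parameter by one: take $N=t-2$ and $b:=\omega_*-(t-2)$. Use $\binom{t-2}{j-1}\frac{t-1}{j}=\binom{t-1}{j}$ together with $b+\frac{t-m-1}{m}=\frac{(t-1)x}{m}$ and $b+\frac{t-q-1}{q}=\frac{(t-1)(qx-q+m)}{mq}$. Then the argument of $\Phi_{m,q}$ equals $D_m(t-2,b)$ and the left side equals $D_q(t-2,b)$ exactly, with $-\frac{t-q-1}{q}\le b\le1$. For $0\le b\le1$ the inequality is the branch definition of $\Phi_{m,q}$. For $b\le0$ it is Lemma~\ref{lem:support} with $N=t-2\ge q-1$.

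Separately, your handling of $\eta=M$ does not hold together. There the left side vanishes, so $M^{1-q/m}$ supplies no strictness. What you actually need is $cM/\omega_0^m\ge\sigma_m(q-1)$: then $\omega<\omega_0$ pushes $c\eta/\omega^m$ into the positive range, contradicting $\Phi_{m,q}(c\eta/\omega^m)\le0$. This reduces to $\sigma_m(t-1)\ge\sigma_m(q)$.
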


\begin{proof}
Set
\(
 u=\frac\eta M\), and 
 \( y=\frac\omega{M^{\frac{1}{m}}}.
\)
Then $1\le u\le L$. It suffices to show
\[
 y\ge y_0:=\frac{t-1}{m}\left(\frac uL+m-1\right).
\]
Assume $y<y_0$. Let
\(
 \delta=\frac{cu}{y^m}\), and \(\delta_0=\frac{cu}{y_0^m}.
\)
Put $N=t-2$ and $b=y_0-N$. Since $L=\frac{q}{q-m}$, we may write
\[
 y_0=\frac{t-1}{mq}\bigl((q-m)u+q(m-1)\bigr), \quad\text{and hence}\quad b=\frac{(t-1)(q-m)u-q(t-m-1)}{mq}.
\]
Because $1\le u\le L$, this gives
\(
 -\frac{N-q+1}{q}\le b\le1.
\)
To compute $D_m(N,b)$ and $D_q(N,b)$, we use
 $D_j(N,b)=\binom{N}{j-1}\frac{b+\frac{N-j+1}{j}}{y_0^j}$ and $N+b=y_0$.
For $N=t-2$ we have
 $b+\frac{N-m+1}{m}=\frac{t-1}{mL}u$ and
 $b+\frac{N-q+1}{q}=\frac{(t-1)(q-m)}{mq}(u-1).$
Together with
 $\binom{t-2}{m-1}\frac{t-1}{m}=\binom{t-1}{m}
$ and 
 $\binom{t-2}{q-1}\frac{t-1}{q}=\binom{t-1}{q},$
this gives
\begin{equation}\label{eq:delta0-D}
 \delta_0=D_m(t-2,b),\qquad D_q(t-2,b)=\frac{h(u-1)}{y_0^q}.
\end{equation}
The displayed range for $b$ also shows that $\delta_0$ lies in the domain of $\Phi_{m,q}$: if $b\le0$, this follows from Lemma~\ref{lem:support}, while if $0\le b\le1$, it follows from the genuine branch parametrisation. Since $\delta>\delta_0$, Lemma~\ref{lem:Q-monotone} gives
\[
 y^q\Phi_{m,q}(\delta)=(cu)^{\frac{q}{m}}Q_{m,q}(\delta)
 \ge (cu)^{\frac{q}{m}}Q_{m,q}(\delta_0)=y_0^q\Phi_{m,q}(\delta_0).
\]
If $b\le0$, use Lemma~\ref{lem:support}; if $0\le b\le1$, use the genuine branch identity. In both cases~\eqref{eq:delta0-D} yields
 $y^q\Phi_{m,q}(\delta)\ge h(u-1).$
On the other hand,~\eqref{eq:one-var-hyp} becomes
$y^q\Phi_{m,q}(\delta)\le h(u-1)M^{1-\frac{q}{m}}$ and thus $M^{1-\frac{q}{m}}\ge 1$.
If $u>1$, then $h(u-1)>0$ while $M^{1-\frac{q}{m}}<1$, a contradiction.

It remains to consider $u=1$. Then~\eqref{eq:one-var-hyp} gives
\(
0\ge\Phi_{m,q}\left(\frac{c}{y^m}\right).
\)
Since $\Phi_{m,q}$ is nonnegative and vanishes exactly on its zero interval, this implies
\(
 \frac{c}{y^m}\le \sigma_m(q-1)=\frac{\binom{q-1}{m}}{(q-1)^m}.
\)
Thus
\(
 y^m\ge \frac{c(q-1)^m}{\binom{q-1}{m}}.
\)
For $u=1$,
$ y_0=\frac{t-1}{m}\left(\frac1L+m-1\right)=\frac{(t-1)(q-1)}{q}.$
It is therefore enough to verify
\[
 \frac{c(q-1)^m}{\binom{q-1}{m}}
 \ge \left(\frac{(t-1)(q-1)}{q}\right)^m.
\]
After cancelling $(q-1)^m$ and using $c=(q-m)q^{-1}\binom{t-1}{m}$, this is equivalent to
\(
 \frac{\binom{t-1}{m}}{(t-1)^m}\ge\frac{\binom{q}{m}}{q^m}.
\)
This follows because $N\mapsto\frac{\binom{N}{m}}{N^m}$ is increasing for $N\ge m$ and $t-1\ge q$.
\end{proof}

\begin{lemma}\label{lem:one-variable-M-one}
The conclusion of Lemma~\ref{lem:one-variable} remains true when the hypothesis $M>1$ is replaced by $M=1$.
\end{lemma}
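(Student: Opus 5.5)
With $M=1$, the substitution of Lemma~\ref{lem:one-variable} gives $u=\eta$ and $y=\omega$, and the hypothesis~\eqref{eq:one-var-hyp} reads $h(u-1)/y^q\ge\Phi_{m,q}(cu/y^m)$. The target is $y\ge y_0:=\frac{t-1}{m}\bigl(\frac uL+m-1\bigr)$. I will rerun the proof of Lemma~\ref{lem:one-variable} and identify exactly where $M>1$ was used.

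Everything up to the comparison $y^q\Phi_{m,q}(\delta)\ge h(u-1)$ goes through verbatim, since it uses only $1\le u\le L$ and the assumption $y<y_0$. That part consists of the choice $N=t-2$, $b=y_0-N$, the identities~\eqref{eq:delta0-D}, Lemma~\ref{lem:support} or the genuine branch, and the monotonicity of $Q_{m,q}$. The factor $M^{1-q/m}$ now equals $1$, so combining with the hypothesis gives no immediate contradiction. Instead it forces equality throughout: $y^q\Phi_{m,q}(\delta)=h(u-1)$, and in particular $Q_{m,q}(\delta)=Q_{m,q}(\delta_0)$.

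The case $u=1$ is handled exactly as before, because that argument never used $M>1$. So assume $1<u\le L$. Then $h(u-1)>0$ forces $\Phi_{m,q}(\delta)>0$, so $\delta$ lies in the positive range. Since $\delta>\delta_0$ (because $y<y_0$), strict monotonicity of $Q_{m,q}$ on the positive range gives $Q_{m,q}(\delta)>Q_{m,q}(\delta_0)$. This holds provided $\delta_0$ is in the domain, which was already checked, and it also holds if $\delta_0$ lies in the zero interval, where $Q_{m,q}$ vanishes. Hence we get the strict inequality $y^q\Phi_{m,q}(\delta)>y_0^q\Phi_{m,q}(\delta_0)\ge h(u-1)$, contradicting the hypothesis.

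The main point to verify is this strictness claim for $Q_{m,q}$ when $\delta_0$ sits at the boundary of the zero interval, that is, when $Q_{m,q}(\delta_0)=0$. In that case $Q_{m,q}(\delta)>0=Q_{m,q}(\delta_0)$ holds trivially because $\Phi_{m,q}(\delta)>0$. By Lemma~\ref{lem:Q-monotone}, every remaining case is strictly increasing. This contradiction shows $y\ge y_0$, which is the claimed bound with $M=1$.
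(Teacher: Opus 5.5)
Your proposal is correct and follows essentially the same route as the paper. Both rerun the proof of Lemma~\ref{lem:one-variable}, keep the $u=1$ zero-range argument unchanged, and for $u>1$ use the strict part of Lemma~\ref{lem:Q-monotone} to upgrade $y^q\Phi_{m,q}(\delta)\ge h(u-1)$ to a strict inequality, contradicting the hypothesis at $M=1$. The only difference is cosmetic: the paper observes directly that $\delta_0$ lies in the positive range, since $y_0^q\Phi_{m,q}(\delta_0)\ge h(u-1)>0$, so your separate treatment of $\delta_0$ in the zero interval is unnecessary but harmless.
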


\begin{proof}
Repeat the proof of Lemma~\ref{lem:one-variable} with $M=1$. Then $u=\eta\in[1,L]$ and $y=\omega$. If the asserted lower bound fails, then $y<y_0$, and the same notation gives $\delta=\frac{cu}{y^m}>\delta_0=\frac{cu}{y_0^m}$.

If $u>1$, then $D_q(t-2,b)=\frac{h(u-1)}{y_0^q}>0$. Moreover $\delta_0$ lies in the positive range of $\Phi_{m,q}$: for $b\le0$ this follows from Lemma~\ref{lem:support}, since otherwise $\Phi_{m,q}(\delta_0)=0$; for $0\le b\le1$ it follows from the genuine branch identity. By the strict part of Lemma~\ref{lem:Q-monotone},
\(
 y^q\Phi_{m,q}(\delta)>y_0^q\Phi_{m,q}(\delta_0).
\)
By Lemma~\ref{lem:support} when $b\le0$, and by the genuine branch identity when $0\le b\le1$, we have $y_0^q\Phi_{m,q}(\delta_0)\ge h(u-1)$. Hence $y^q\Phi_{m,q}(\delta)>h(u-1)$, whereas the assumed profile inequality with $M=1$ gives $y^q\Phi_{m,q}(\delta)\le h(u-1)$, a contradiction. If $u=1$, the zero-range argument in the proof of Lemma~\ref{lem:one-variable} applies without change. Therefore $y\ge y_0$.
\end{proof}

\section{Clique profiles}\label{sec:proof-main}

We are ready to prove Theorem~\ref{thm:main}. The argument has five steps: extremality, normalisation of the local parameters, induction inside vertex links, the global edge-density bound, and the final scalar comparison.

\begin{proof}[Proof of Theorem~\ref{thm:main}]
We proceed by induction on $s$. The case $s=2$ is Theorem~\ref{thm:reiher}. Assume $s\ge3$ and that the theorem is known for source clique size $s-1$ and all larger target sizes. Fix $r>s$. Suppose, for contradiction, that a counterexample exists. Since graphon space modulo measure-preserving transformations is compact in the cut metric and clique densities are cut-continuous, the continuous functional
\[
 H(W):=g_r(W)-\Phi_{s,r}(g_s(W))
\]
has a minimiser with $H(W)<0$. Write $k_j=g_j(W)$.

The zero range $k_s\le\sigma_s(r-1)$ is impossible because then $\Phi_{s,r}(k_s)=0$. Also $k_s\ne\frac{1}{s!}$. Indeed, if $g_s(W)=\frac{1}{s!}$, then the integral defining $s!g_s(W)$ is equal to $1$; since this integrand is bounded above by $1$, it equals $1$ for almost every $s$-tuple, and Fubini then gives $W=1$ almost everywhere. Hence in that case $g_r(W)=\frac{1}{r!}$, not a counterexample. By Lemma~\ref{lem:critical-values}, $k_s$ is not a critical value. Therefore for some $t\ge r$ and some $0<a<\frac{1}{t-1}$,
\(
 k_s=P_s(t,a)\),
 \(k_r<P_r(t,a).
\)
Put
\(
 \rho:=\frac{1+a}{t},
\)
and let $\lambda=\frac{r(r-1)}{s(s-1)}\frac{\sigma_r(t)}{\sigma_s(t)}(1+a)^{r-s}=\Phi'_{s,r}(k_s)$ as in~\eqref{eq:branch-derivative}. Set $p_r:=P_r(t,a)$ and
\(
 \tau_0:=s\lambda k_s-rp_r.
\)
We compute $\tau_0$ on this branch explicitly. Recall
$k_s=\sigma_s(t)(1+a)^{s-1}\bigl(1-(s-1)a\bigr)$ and $p_r=\sigma_r(t)(1+a)^{r-1}\bigl(1-(r-1)a\bigr).$
Therefore
\begin{align*}
 s\lambda k_s-rp_r
 &=\frac{r(r-1)}{s-1}\sigma_r(t)(1+a)^{r-1}\bigl(1-(s-1)a\bigr) -r\sigma_r(t)(1+a)^{r-1}\bigl(1-(r-1)a\bigr) \\
 &=\frac{r(r-s)}{s-1}\sigma_r(t)(1+a)^{r-1}.
\end{align*}
Thus
\begin{equation}\label{eq:tau0}
 \tau_0=\frac{r(r-s)}{s-1}\binom{t}{r}t^{-r}(1+a)^{r-1}>0.
\end{equation}

\medskip
\noindent\emph{Step 1: extremality.}
Define
\(
 C(x):=W_x(K_{s-1})\), and \( D(x):=W_x(K_{r-1}).
\)
We first claim that there is a real number $\tau$ such that
\begin{equation}\label{eq:vertex-equation}
 D(x)=\lambda C(x)-\tau
\end{equation}
for almost every $x$. To see this, take $\phi\in L^\infty(\Omega)$ with $\int\phi\,d\mu=0$ and set
\(
 d\mu_\eps=(1+\eps\phi)d\mu
\)
for sufficiently small $|\eps|$. Taking $|\eps|<\|\phi\|_\infty^{-1}$ when $\phi$ is nonzero, $\mu_\eps$ is a probability measure equivalent to $\mu$. In particular, $(\Omega,\mu_\eps)$ is again a standard atomless probability space. After passing, if desired, to an isomorphic standard probability model, the triple $(\Omega,\mu_\eps,W)$ is therefore an admissible graphon competitor. Differentiating gives
\[
 \left.\frac{d}{d\eps}g_j(W;\mu_\eps)\right|_{\eps=0}
 =\int_\Omega \phi(x)W_x(K_{j-1})\,d\mu(x).
\]
Minimality of $W$ and differentiability of $\Phi_{s,r}$ at $k_s$ imply
$ \int_\Omega \phi(x)(D(x)-\lambda C(x))\,d\mu(x)=0$
for every mean-zero $\phi$. Hence $D-\lambda C$ is almost everywhere constant, which proves~\eqref{eq:vertex-equation}.

Next, let $B\in L^\infty(\Omega^2)$ be symmetric with $0\le B\le W$. The downward perturbation $W_\eps=W-\eps B$ is admissible for small $\eps>0$, and
\[
 \left.\frac{d}{d\eps}g_j(W+\eps B)\right|_{\eps=0}
 =\frac12\int_{\Omega^2} B(x,y)W_{x,y}(K_{j-2})\,d\mu(x)d\mu(y).
\]
Applying this derivative to $W-\eps B$ and using minimality gives
\[
 \int_{\Omega^2} B(x,y)\bigl(W_{x,y}(K_{r-2})-\lambda W_{x,y}(K_{s-2})\bigr)\,d\mu(x)d\mu(y)\le0.
\]
Taking $B=W\mathbf 1_E$ and varying over measurable symmetric sets $E\subseteq\Omega^2$ gives the pointwise one-sided inequality
$W(x,y)\bigl(W_{x,y}(K_{r-2})-\lambda W_{x,y}(K_{s-2})\bigr)\le0$
for almost every $(x,y)$. Integrating this in $y$ and using~\eqref{eq:edge-fubini} yields
\begin{equation}\label{eq:vertex-ineq}
 (r-1)D(x)\le(s-1)\lambda C(x)
\end{equation}
for almost every $x$. Integrating~\eqref{eq:vertex-equation} and using~\eqref{eq:vertex-fubini}, we obtain $r k_r=s\lambda k_s-\tau.$
Together with $k_r<p_r$, this gives $\tau>\tau_0$. Define
 $M:=\frac{\tau}{\tau_0}>1.$

\medskip
\noindent\emph{Step 2: normalised local parameters.}
Let
\[
 m:=s-1,
 \quad q:=r-1,
 \quad L:=\frac{q}{q-m}=\frac{r-1}{r-s},
\quad\text{and}\quad
 c:=\frac1L\binom{t-1}{m},
 \quad
 h:=\frac{q-m}{m}\binom{t-1}{q}.
\]
The identities relating these constants to the branch parameter are immediate from~\eqref{eq:branch-derivative} and~\eqref{eq:tau0}. First,
\[
 \frac{\tau_0}{\lambda}
 =\frac{s(r-s)}{r-1}\sigma_s(t)(1+a)^{s-1}
 =\frac1L\binom{t-1}{s-1}\left(\frac{1+a}{t}\right)^{s-1}=c\rho^m.
\]
Second,
 $\tau_0=\frac{r-s}{s-1}\binom{t-1}{r-1}\left(\frac{1+a}{t}\right)^{r-1}=h\rho^q.$
Since $D\ge0$, the vertex equation gives $C\ge\frac{\tau}{\lambda}$. Combining~\eqref{eq:vertex-ineq} with $D=\lambda C-\tau$ gives
\(
\frac{\tau}{\lambda}\le C(x)\le L\frac{\tau}{\lambda}
\)
for almost every $x$. Thus there is a measurable function $\eta$ such that
\begin{equation}\label{eq:eta-range}
 M\le\eta(x)\le LM
\end{equation}
and
\begin{equation}\label{eq:CD-eta}
 C(x)=c\rho^m\eta(x),
 \qquad
 D(x)=h\rho^q(\eta(x)-M).
\end{equation}
Averaging the first identity in~\eqref{eq:CD-eta} gives
\begin{equation}\label{eq:eta-average}
 \int_\Omega \eta\,d\mu=L(1-ma).
\end{equation}
Indeed, this is just $\int C=s k_s$ together with the definitions of $c$, $\rho$, and $k_s=P_s(t,a)$.

\medskip
\noindent\emph{Step 3: induction in vertex links.}
Let
\(
 d(x):=\int_\Omega W(x,y)\,d\mu(y)
\)
be the degree function. Since $C(x)>0$ almost everywhere, we have $d(x)>0$ almost everywhere. For such $x$, define the link graphon $W^{(x)}$ on the probability space $(\Omega,\nu_x)$ by
\[
 d\nu_x(y):=\frac{W(x,y)}{d(x)}\,d\mu(y),
 \qquad
 W^{(x)}(y,z):=W(y,z).
\]
Here $\nu_x$ is a probability measure and $\nu_x\ll\mu$; since $\mu$ is atomless, $(\Omega,\nu_x)$ is also atomless. Thus $W^{(x)}$ is an admissible graphon. Put $d(x)=\rho\omega(x)$. By~\eqref{eq:CD-eta}, the clique densities in the link are
\[
 g_m(W^{(x)})=\frac{C(x)}{d(x)^m}=\frac{c\eta(x)}{\omega(x)^m},
 \qquad
 g_q(W^{(x)})=\frac{D(x)}{d(x)^q}=\frac{h(\eta(x)-M)}{\omega(x)^q}.
\]
In particular $\frac{c\eta(x)}{\omega(x)^m}$ belongs to the domain of $\Phi_{m,q}$. Applying the induction hypothesis to $W^{(x)}$ and the pair $(m,q)$ gives
$ \frac{h(\eta(x)-M)}{\omega(x)^q}
 \ge \Phi_{m,q}\left(\frac{c\eta(x)}{\omega(x)^m}\right).$
Together with~\eqref{eq:eta-range}, Lemma~\ref{lem:one-variable} yields
$ \omega(x)\ge\frac{t-1}{m}M^{\frac{1}{m}-1}\left(\frac{\eta(x)}{L}+(m-1)M\right)$
for almost every $x$. Integrating and using~\eqref{eq:eta-average},
\begin{equation}\label{eq:omega-lower}
 \int_\Omega \omega\,d\mu
 \ge\frac{t-1}{m}M^{\frac{1}{m}-1}\bigl(1-ma+(m-1)M\bigr).
\end{equation}

\medskip
\noindent\emph{Step 4: the global edge-density bound.}
Let $\gamma=g_2(W)$. The candidate multipartite graphon with parameters $(t,a)$ has edge density
\(\gamma_0=P_2(t,a)=\frac{t-1}{2t}(1-a^2)
\)
and $K_s$-density $P_s(t,a)=k_s$. If $\gamma>\gamma_0$, then the strict monotonicity of $F_s$ and Theorem~\ref{thm:reiher} give
$ k_s=g_s(W)\ge F_s(\gamma)>F_s(\gamma_0)=k_s,$
a contradiction. Hence $\gamma\le\gamma_0$. Since
$ \int_\Omega d(x)\,d\mu(x)=2g_2(W)=2\gamma$
and $d=\rho\omega$, we get
\begin{equation}\label{eq:omega-upper}
 \int_\Omega \omega\,d\mu\le\frac{2\gamma_0}{\rho}=(t-1)(1-a).
\end{equation}

\medskip
\noindent\emph{Step 5: conclusion.}
Combining~\eqref{eq:omega-lower} and~\eqref{eq:omega-upper} gives
 $\frac{1}{m} M^{\frac{1}{m}-1}\bigl(1-ma+(m-1)M\bigr)\le1-a.$
Equivalently, writing $z=M^{\frac{1}{m}}>1$, we have
\[
 P(z):=(m-1)z^m-m(1-a)z^{m-1}+1-ma\le 0.
\]
But $P(1)=0$ and
 $P'(z)=m(m-1)z^{m-2}\bigl(z-(1-a)\bigr)>0$, a contradiction. The induction step is complete, and hence Theorem~\ref{thm:main} follows.
\end{proof}

\section{Stability}\label{sec:stability}

We now prove Theorems~\ref{thm:graphon-stability} and~\ref{thm:finite-stability}.

\begin{proposition}\label{prop:equality-reduction}
Let $3\le s<r$. Suppose that a graphon $W$ satisfies
\(
 g_s(W)>\sigma_s(r-1)
\) 
and
\(
 g_r(W)=\Phi_{s,r}(g_s(W)).
\)
Then $W\in R_s(\gamma)$, i.e.~if $\gamma=g_2(W)$, then
\(
 g_s(W)=F_s(\gamma).
\)
Moreover, if
\(g_s(W)=P_s(t,a)\) for \(t\ge r\) and  \(0<a<\frac1{t-1}
\),
then
\(
 g_2(W)=P_2(t,a).
\)
\end{proposition}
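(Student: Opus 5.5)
The plan is to rerun the proof of Theorem~\ref{thm:main} at equality instead of in the strict case. There are three cases for $y:=g_s(W)>\sigma_s(r-1)$. The first is the endpoint $y=1/s!$. The second is a critical value $y=\sigma_s(t)$ with $t\ge r$; the value $t=r-1$ is excluded by hypothesis. The third is a smooth branch point $y=P_s(t,a)$ with $0<a<\frac1{t-1}$. The endpoint is immediate: as in the proof of Theorem~\ref{thm:main}, $W=1$ almost everywhere, so $\gamma=1/2$ and $F_s(1/2)=1/s!=g_s(W)$.

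\emph{Smooth branch.} By Theorem~\ref{thm:main} the functional $H(W)=g_r(W)-\Phi_{s,r}(g_s(W))$ is nonnegative on all graphons. So our $W$, which has $H(W)=0$, is a global minimiser, and $\Phi_{s,r}$ is differentiable at $k_s$. Step~1 of the proof of Theorem~\ref{thm:main} then applies verbatim. We get the vertex equation $D=\lambda C-\tau$ and the pair inequality~\eqref{eq:vertex-ineq}, and integration gives $rk_r=s\lambda k_s-\tau$. Now $k_r=p_r$ exactly, so $\tau=\tau_0$, that is $M=1$. Step~2 gives $1\le\eta\le L$ and $C=c\rho^m\eta>0$, so $d>0$ almost everywhere. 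In Step~3, the induction hypothesis for the pair $(s-1,r-1)$ (which is just Theorem~\ref{thm:main}, now proved in full) applies inside each link. Lemma~\ref{lem:one-variable-M-one} then gives
\[
\omega(x)\ge\frac{t-1}{m}\Bigl(\frac{\eta(x)}{L}+m-1\Bigr).
\]
Integrating with~\eqref{eq:eta-average} yields $\int\omega\ge\frac{t-1}{m}(1-ma+m-1)=(t-1)(1-a)$, that is $2\gamma/\rho\ge(t-1)(1-a)$, so $\gamma\ge\gamma_0=P_2(t,a)$. Step~4 (Reiher's theorem together with the strict monotonicity of $F_s$) gives $\gamma\le\gamma_0$. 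Hence $\gamma=P_2(t,a)$, and $g_s(W)=P_s(t,a)=F_s(P_2(t,a))=F_s(\gamma)$. This proves both assertions on smooth branches.

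\emph{Critical values.} Suppose $g_s=\sigma_s(t)$ and $g_r=\sigma_r(t)$. The goal is to show $k_2=\sigma_2(t)$, because then $g_s=\sigma_s(t)=F_s(\sigma_2(t))$. I would sharpen the proof of Lemma~\ref{lem:critical-values} in two moves.
\begin{itemize}
\item[(i)] Fix the ratio at level $s$. That proof gives $k_{s-1}\le\sigma_{s-1}(t)$, hence $z_s\ge\frac{t-s+1}{t}$, and then $z_m\ge\frac{t-m+1}{t}$ for $s\le m\le r$. The map $z\mapsto\frac{mz-1}{m-1}$ is strictly increasing, so a strict inequality at $z_s$ would propagate to a strict inequality at $z_r$. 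That would give $k_r>\sigma_r(t)$, a contradiction. Hence $z_s=\frac{t-s+1}{t}$, and since $k_s=\sigma_s(t)$ this forces $k_{s-1}=\sigma_{s-1}(t)$.
\item[(ii)] Descend. Now $k_{s-1}$ and $k_s$ are both at their balanced $t$-partite values. If $s-1\ge3$, repeat (i) for the pair $(s-2,s-1)$. Theorem~\ref{thm:main} for source size $s-2$ and strict monotonicity of $\Phi_{s-2,s-1}$ give $k_{s-2}\le\sigma_{s-2}(t)$, hence a lower bound on $z_{s-1}$. Strict monotonicity of the single recursion step from $z_{s-1}$ to $z_s$, together with $z_s=\frac{t-s+1}{t}$, forces equality, so $k_{s-2}=\sigma_{s-2}(t)$. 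Continuing down to level $2$ gives $k_2=\sigma_2(t)$.
\end{itemize}

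The main obstacle I expect is bookkeeping, not new ideas. On the smooth branch, one must check that the variational conditions are legitimate at a minimiser with value $0$ rather than a negative value, and that Lemma~\ref{lem:one-variable-M-one} is exactly the $M=1$ input the argument needs. At critical values, one must verify that each step of the descent uses only the upper bound $k_{j-1}\le\sigma_{j-1}(t)$, which comes from Theorem~\ref{thm:main}, together with strict monotonicity of the recursion.
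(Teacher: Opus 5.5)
Your proposal is correct and follows the paper's proof. The smooth-branch case reruns Steps 1--4 at equality with $M=1$ and Lemma~\ref{lem:one-variable-M-one}, and the critical case first forces $g_{s-1}(W)=\sigma_{s-1}(t)$ by strict propagation through the adjacent-clique recursion. The only difference is that you unroll the descent to $g_2(W)=\sigma_2(t)$ directly, whereas the paper applies the proposition inductively to the pair $(s-1,s)$ and then uses strict monotonicity of $F_{s-1}$; this amounts to the same chain of one-step equality arguments.
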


\begin{proof}
The smooth-branch case is independent of induction. In the critical-value case we argue by induction on $s$; the first nontrivial source size is $s=3$, and for $s>3$ we assume the proposition already known with source size $s-1$.

First suppose that $g_s(W)$ lies in a smooth positive branch, say
\(g_s(W)=P_s(t,a)\) for \(t\ge r\) and  \(0<a<\frac1{t-1}
\). 
Write $k_j=g_j(W)$. By Theorem~\ref{thm:main}, the functional
\(
 U\longmapsto g_r(U)-\Phi_{s,r}(g_s(U))
\)
is nonnegative on graphon space. The assumed equality shows that $W$ is a global minimiser of this functional. Repeating the extremality argument from Step~1 in the proof of Theorem~\ref{thm:main}, with
\(
 C(x)=W_x(K_{s-1})\), \(D(x)=W_x(K_{r-1})\) and \(
 \lambda=\Phi'_{s,r}(g_s(W)),
\)
gives a real number $\tau$ such that
\(
 D(x)=\lambda C(x)-\tau
\)
for almost every $x$, and the one-sided edge variation gives
\[
 W(x,y)\bigl(W_{x,y}(K_{r-2})-\lambda W_{x,y}(K_{s-2})\bigr)\le0
\]
for almost every $(x,y)$. Consequently,
\(
 (r-1)D(x)\le(s-1)\lambda C(x)
\)
for almost every $x$.

We use the notation from the proof of Theorem~\ref{thm:main}. Put
\(
 m=s-1\),
\(q=r-1\),
 \(L=\frac{q}{q-m}\),
 \( \rho=\frac{1+a}{t}\),
\(
 c=\frac1L\binom{t-1}{m},
 \) and
 \(h=\frac{q-m}{m}\binom{t-1}{q}.
\)
Also put
\(
 p_r=P_r(t,a)\), and 
 \(\tau_0=s\lambda P_s(t,a)-rP_r(t,a).
\)
As in the proof of Theorem~\ref{thm:main},
\(
 \frac{\tau_0}{\lambda}=c\rho^m
 \) and
 \(\tau_0=h\rho^q.
\)
Integrating the vertex equation gives
\(
 r k_r=s\lambda k_s-\tau.
\)
Since equality holds and $k_s=P_s(t,a)$, $k_r=P_r(t,a)$, we get $\tau=\tau_0$. Thus the analogue of the parameter $M$ in the proof of Theorem~\ref{thm:main}, namely $M=\frac{\tau}{\tau_0}$, is equal to $1$.

The derivation of the local parameter is unchanged. Hence there is a measurable function $\eta$ such that
\(
 1\le\eta(x)\le L
\),
\( C(x)=c\rho^m\eta(x)
 \) and \(D(x)=h\rho^q(\eta(x)-1).
\)
Moreover,
\(\int_\Omega\eta\,d\mu=L(1-ma).
\)
Let
\(
 d(x):=\int_\Omega W(x,y)\,d\mu(y)
\)
and write $d(x)=\rho\omega(x)$. The link argument from the proof of Theorem~\ref{thm:main}, with Lemma~\ref{lem:one-variable-M-one} replacing Lemma~\ref{lem:one-variable}, gives
\[
 \omega(x)\ge\frac{t-1}{m}\left(\frac{\eta(x)}{L}+m-1\right)
\]
for almost every $x$.
After integration,
\(\int_\Omega\omega\,d\mu\ge(t-1)(1-a).
\)
On the other hand, as in Step~4,
\[
\int_\Omega\omega\,d\mu=\frac{2g_2(W)}{\rho}
 \le\frac{2P_2(t,a)}{\rho}=(t-1)(1-a),
 \qquad \rho=\frac{1+a}{t}.
\]
Together with the preceding lower bound, this forces equality throughout
the displayed inequality above. Hence
\(
 g_2(W)=P_2(t,a),
\)
and therefore
\[
 g_s(W)=P_s(t,a)=F_s(P_2(t,a))=F_s(g_2(W)).
\]
This proves the smooth branch case.

It remains to consider a positive critical value. Thus
\( g_s(W)=\sigma_s(t)
\)
for some $t\ge r$, and equality gives
\(
 g_r(W)=\sigma_r(t).
\)
We first show that equality must also hold at the previous level:
\(
 g_{s-1}(W)=\sigma_{s-1}(t).
\)
Indeed, by Theorem~\ref{thm:main} applied to the pair $(s-1,s)$, together with the strict monotonicity of $\Phi_{s-1,s}$ on its positive range, we have
\(
 g_{s-1}(W)\le\sigma_{s-1}(t).
\)
If this inequality were strict, then $g_{s-1}(W)>0$ and
\(
 z_s=\frac{s g_s(W)}{g_{s-1}(W)}
\)
would be strictly larger than the balanced value $\frac{t-s+1}{t}$. The recursive estimate
\(
 z_{m+1}\ge \frac{m z_m-1}{m-1}
\)
used in Lemma~\ref{lem:critical-values} is strictly increasing in $z_m$, so the strict improvement propagates from $z_s$ to $z_{s+1},\dots,z_r$. Consequently the product formula in the proof of Lemma~\ref{lem:critical-values} gives
\(
 g_r(W)>\sigma_r(t),
\)
contradicting $g_r(W)=\sigma_r(t)$. Hence
\(
 g_{s-1}(W)=\sigma_{s-1}(t).
\)
Therefore
\(
 g_s(W)=\Phi_{s-1,s}(g_{s-1}(W)).
\)
Since $t\ge r>s$, this lies in the positive range for the pair $(s-1,s)$.

If $s=3$, then $g_{s-1}(W)=\sigma_{s-1}(t)$ already says
\(
 g_2(W)=\sigma_2(t).
\)
If $s>3$, the induction hypothesis in the present proposition, applied to the equality case $K_{s-1}\to K_s$, shows that $W$ is an equality case in Theorem~\ref{thm:reiher}. Hence, with $\gamma=g_2(W)$,
\(
 g_{s-1}(W)=F_{s-1}(\gamma).
\)
Since
\(
 g_{s-1}(W)=\sigma_{s-1}(t)=F_{s-1}(\sigma_2(t))
\)
and $F_{s-1}$ is strictly increasing on the positive range, we again get
\(
 g_2(W)=\sigma_2(t).
\)
In either case,
\[ g_s(W)=\sigma_s(t)=F_s(\sigma_2(t))=F_s(g_2(W)).
\]
This proves the critical case.

Finally, if $g_s(W)=\frac{1}{s!}$, then $W=1$ almost everywhere, and the conclusion is immediate with $g_2(W)=\frac{1}{2}$. This completes the proof.
\end{proof}

\begin{proof}[Proof of Theorem~\ref{thm:graphon-stability}]
Suppose the assertion fails. Then there exist $\eps_0>0$, a sequence $\delta_n\to0$, and graphons $W_n$ such that
 $|g_s(W_n)-y|\le\delta_n$ and 
 $g_r(W_n)\le\Phi_{s,r}(g_s(W_n))+\delta_n$,
but
\(
\delta_\cut(W_n,R_s(\gamma_y))\ge\eps_0
\)
for every $n$. By Theorem~\ref{thm:main},
$ 0\le g_r(W_n)-\Phi_{s,r}(g_s(W_n))\le\delta_n$.
Hence
\(
 g_s(W_n)\to y,\) and
 \(g_r(W_n)-\Phi_{s,r}(g_s(W_n))\to0.
\)
By compactness of graphon space in cut distance, after passing to a subsequence we may assume that
\(
 W_n\to W
\)
in cut distance. Clique densities are continuous in the cut topology, and $\Phi_{s,r}$ is continuous. Hence
\(
 g_s(W)=y,\) and 
 \(g_r(W)=\Phi_{s,r}(y)=P_r(t,a).
\)
By Proposition~\ref{prop:equality-reduction},
\(
 g_2(W)=\gamma_y\) and \(
 W\in R_s(\gamma_y).
\)
Therefore $\delta_\cut(W_n,R_s(\gamma_y))\le\delta_\cut(W_n,W)\to0,$
contradicting the choice of $W_n$. This proves the theorem.
\end{proof}

\begin{proof}[Proof of Theorem~\ref{thm:finite-stability}]
Suppose the assertion fails. Then there exist $\eps_0>0$, a sequence $\delta_n\to0$, and graphs $G_n$ with $|V(G_n)|=n\to\infty$ such that
\(
 g_r(G_n)\le\Phi_{s,r}(g_s(G_n))+\delta_n,
\)
but $G_n$ cannot be changed in at most $\eps_0n^2$ adjacencies into any member of $H_{s,r,n}$.

Applying Theorem~\ref{thm:main} to the step graphon of $G_n$ gives
$ 0\le g_r(G_n)-\Phi_{s,r}(g_s(G_n))\le\delta_n.$
In particular,
\(
 g_r(G_n)-\Phi_{s,r}(g_s(G_n))\to0.
\)
Let $W_n$ be the step graphon of $G_n$. Passing to a subsequence, we may assume that
\(
 W_n\to W
\)
in cut distance. Put
\(
 x=g_s(W).
\)
By continuity of clique densities and of $\Phi_{s,r}$, and by the asymptotic equality above,
\(
 g_r(W)=\Phi_{s,r}(x).
\)
If
\(
 x\le\sigma_s(r-1),
\)
then $\Phi_{s,r}(x)=0$, and hence $g_r(W)=0$. It follows that
\(
 N_r(G_n)=o(n^r).
\)
By the graph removal lemma of Erd\H os, Frankl and R\"odl~\cite{ErdosFranklRodl1986}, we can make $G_n$ $K_r$-free by changing $o(n^2)$ adjacencies. The resulting graph is a member of $\mathcal F_{r,n}\subseteq H_{s,r,n}$, contradicting the assumed $\eps_0n^2$-separation.

Thus
\(
 x>\sigma_s(r-1).
\)
By Proposition~\ref{prop:equality-reduction}, $W$ is an equality case for Theorem~\ref{thm:reiher}. Therefore, with $\gamma=g_2(W)$,
\(
 g_s(W)=F_s(\gamma).
\)
Since $W_n\to W$ in cut distance and $F_s$ is continuous,
\(
 g_s(W_n)-F_s(g_2(W_n))\to0.
\)
By Theorem~\ref{thm:reiher}, this difference is nonnegative for every $n$. Thus the graphs $G_n$ are asymptotically extremal for Theorem~\ref{thm:reiher}. Let $\eta>0$, and let $\delta_\eta>0$ and $n_\eta$ be the constants supplied by Theorem~\ref{thm:KLP} with $m=s$ and $\eps=\eta$. Since
\(
 g_s(G_n)-F_s(g_2(G_n))\to0,
\)
all sufficiently large $n$ satisfy $n\ge n_\eta$ and
$ g_s(G_n)\le F_s(g_2(G_n))+\delta_\eta.$
Therefore $G_n$ can be changed by at most $\eta n^2$ adjacencies into a member of $H_{s,n}\subseteq H_{s,r,n}$. Since $\eta>0$ is arbitrary, the edit distance from $G_n$ to $H_{s,r,n}$ is $o(n^2)$, contradicting the assumed $\eps_0n^2$-separation. This proves the theorem.
\end{proof}

\section*{Acknowledgements}

Yixiao Zhang is very grateful for the kind hospitality of IBS ECOPRO during his visit. 
After writing this manuscript, we learnt that Jie Ma, Tianhen Wang
and Tianming Zhu~\cite{MaWangZhu2026} had independently and simultaneously obtained similar results. 
We thank them for their communication. 

\paragraph{Declaration on the use of generative AI.}

The proof strategy was developed by the authors, inspired by the variational-and-link philosophy of Kim, Liu, Pikhurko and Sharifzadeh~\cite{KimLiuPikhurkoSharifzadeh2020}. The authors used generative AI tools to help check and simplify routine technical computations in several auxiliary lemmas. 
All AI-assisted suggestions and computations were independently checked and verified by the authors.

\bibliographystyle{abbrv}
\bibliography{clique_density}

\end{document}